\numberwithin{table}{section}
\theoremstyle{plain}
\newtheorem{theorem}{Theorem}[section]
\newtheorem{lemma}[theorem]{Lemma}
\newtheorem{corollary}[theorem]{Corollary}
\theoremstyle{definition} 
\newtheorem{remark}[theorem]{Remark}
\renewcommand{\ge}{\geqslant}
\renewcommand{\le}{\leqslant}
\newcommand{\A}{\mathcal{A}}
\newcommand{\B}{\mathcal{B}}
\newcommand{\GL}{\mathrm{GL}}
\newcommand{\Null}{\textup{Null}}
\newcommand{\Rk}{\mathrm{Rk}}
\newcommand{\Z}{\mathbb{Z}}
\def\@adminfootnotes{%
  \let\@makefnmark\relax  \let\@thefnmark\relax
  \ifx\@empty\@date\else \@footnotetext{\@setdate}\fi
  \ifx\@empty\@subjclass\else \@footnotetext{\@setsubjclass}\fi
  \ifx\@empty\@keywords\else \@footnotetext{\@setkeywords}\fi
  \ifx\@empty\thankses\else \@footnotetext{%
    \def\par{\let\par\@par}\@setthanks}%
  \fi}\makeatother   
\begin{document}

\hyphenation{}

\title[On the parameters of intertwining codes]{On the parameters of intertwining codes}
\author[S.\,P. Glasby, Cheryl E. Praeger]{S.\,P. Glasby and Cheryl E. Praeger}

\address[Glasby, Praeger]{
Centre for Mathematics of Symmetry and Computation,
University of Western Australia,
35 Stirling Highway,
Crawley 6009, Australia.\newline
 Email: {\tt Stephen.Glasby@uwa.edu.au; WWW: \href{http://www.maths.uwa.edu.au/~glasby/}{http://www.maths.uwa.edu.au/$\sim$glasby/}}\newline
Email: {\tt Cheryl.Praeger@uwa.edu.au; WWW: \href{http://www.maths.uwa.edu.au/~praeger}{http://www.maths.uwa.edu.au/$\sim$praeger}}
}

\date{\today\hfill 2010 Mathematics subject classification:
 94B65, 60C05}

\begin{abstract}
Let $F$ be a field and let $F^{r\times s}$ denote the space of $r\times s$ matrices
over $F$. Given equinumerous subsets
$\A=\{A_i\mid i \in I\}\subseteq F^{r\times r}$ and
$\B=\{B_i\mid i\in I\}\subseteq F^{s\times s}$ we call the subspace
$C(\A,\B):=\{X\in F^{r\times s}\mid A_iX=XB_i\textup{ for $i\in I$}\}$
an \emph{intertwining code}. We show that if $C(\A,\B)\ne\{0\}$, then
for each $i\in I$, the characteristic polynomials
of $A_i$ and $B_i$ and share a nontrivial factor. We give an exact formula
for $k=\dim(C(\A,\B))$ and give upper and lower bounds. This
generalizes previous work. Finally we construct
intertwining codes with large minimum distance when the field is
not `too small'.
We give examples of codes
where $d=rs/k=1/R$ is large where the minimum distance, dimension, and rate
of the linear code $C(\A,\B)$ are denoted by $d$, $k$, and $R=k/rs$,~respectively. 
\end{abstract}

\maketitle

\section{Introduction}\label{S1}

Let $F$ be a field and let $F^{r\times s}$ denote the space of $r\times s$ matrices
over $F$. Given equinumerous subsets
$\A=\{A_i\mid i \in I\}\subseteq F^{r\times r}$ and
$\B=\{B_i\mid i \in I\}\subseteq F^{s\times s}$ we call the subspace
$C(\A,\B):=\{X\in F^{r\times s}\mid A_iX=XB_i\textup{ for $i\in I$}\}$
an \emph{intertwining code}. The parameters of this linear code are denoted
$[n,k,d]$ where $n=rs$, $k:=\dim(C(\A,B))$ and $d$ is the
\emph{minimum distance} of $C(\A,\B)$. Given $u,v\in F^n$ the
\emph{Hamming distance}
$d(u,v)=|\{i\mid u_i\ne v_i\}|$ is the number of different coordinate entries,
and a subspace $C\le F^n$ has minimal (Hamming) distance
\[
  d(C):=\min\{d(u,v)\mid u\ne v\}
  =\min\{d(0,w)\mid w\in V\textup{ where $w\ne 0$.}\}.
\]
If $|I|=1$ we write $C(A,B)$ instead of $C(\{A\},\{B\})$.
Centralizer codes~\cite{AAKYPS} have the form $C(A,A)$ and 
twisted centralizer codes~\cites{AGPSY,AGP} have the form $C(A,\alpha A)$ where
$A\in F^{r\times s}$ and $\alpha\in F$. Intertwining
codes $C(A,B)$ are more general still, so our dimension formula 
(Theorem~\ref{T}) has particularly wide applicability. Furthermore,
the greater abundance of intertwining codes turns out to help us
construct intertwining codes with
large minimum distance, cf. Theorem~\ref{T11}
and~\cite{AGPSY}*{Theorem~3.2}. Intertwining codes have the advantage of
a short description, and fast matrix multiplication algorithms give rise
to efficient syndrome computations which, in turn, may be used for decoding as
described in~\cite{AGPSY}*{\S3}.

Given representations $g_i\mapsto A_i$ and $g_i\mapsto B_i$
a group algebra $F\langle g_i\mid i\in I\rangle$, elements of $C(\A,\B)$
are homomorphisms between the associated modules. 
Hence Lemma~\ref{L4}
generalizes the fact that irreducible representations with distinct
characters are inequivalent.

An exact formula for $k:=\dim(C(A,B))$ is given
in Theorems~\ref{T6} and~\ref{T} of Section~\ref{S2}. The formula for $k$
is simplified by an identity involving partitions proved
in Section~\ref{S3}. Simpler upper and
lower bounds for $k$ are given in Section~\ref{S4}.  In Theorem~\ref{T11} in
Section~\ref{S3}, we give an algorithm to construct $A,B$ for which
the minimum distance is $d(C(A,B))=\lfloor r/k\rfloor{s}$.
These examples have $dR\le1$ where $R=\frac{k}{rs}$ is the rate of $C(A,B)$.

Corollary~\ref{C12} to Theorem~\ref{T11} shows that there exist matrices
$A\in F^{r\times r}$ and $B\in F^{s\times s}$ such that the intertwining code
$C(A,B)$ has dimension $\min\{r,s\}$ and minimum distance $\max\{r,s\}$.
We wonder how much this result can be improved.

\section{A formula for \texorpdfstring{$\dim_F(C(\A,\B))$}{}}\label{S2}

Throughout this section $\A=\{A_i\mid i\in I\}\subset F^{r\times r}$ and
$\B=\{B_i\mid i\in I\}\subset F^{s\times s}$ for a field $F$.
The idea underlying this section is to use the Jordan form over the
algebraic closure $\overline{F}$ of $F$ to compute $\dim_F(C(\A,\B))$.
To implement this idea we must simultaneously conjugate each
$A_i\in\A$, and each $B_i\in\B$, into Jordan form. This is always
possible when $|I|=1$.

Let $\GL(r,F)$ denote the general linear group
of $r\times r$ invertible matrices over~$F$. 
An ordered pair $(R,S)\in\GL(r,F)\times\GL(s,F)$ acts on $F^{r\times s}$ 
via $X^{(R,S)}=R^{-1}XS$. Clearly
$(X^{(R_1,S_1)})^{(R_2,S_2)}=X^{(R_1R_2,S_1S_2)}$,
$(XS_1)^{(R,S)}=X^{(R,S)}S_1^S$, and $(R_1X)^{(R,S)}=R_1^RX^{(R,S)}$.

\begin{lemma}\label{L1}
If $(R,S)\in\GL(r,F)\times\GL(s,F)$, then
\[
  C(\A,\B)^{(R,S)}=R^{-1}C(\A,\B)S=C(\A^R,\B^S)
\]
where $\A^R:=\{R^{-1}A_iR\mid i\in I\}$ and $\B^S:=\{S^{-1}B_iS\mid i\in I\}$.
\end{lemma}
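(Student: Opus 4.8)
The plan is to verify the two displayed equalities in turn, both by direct computation with the action $X^{(R,S)}=R^{-1}XS$. The first equality, $C(\A,\B)^{(R,S)}=R^{-1}C(\A,\B)S$, is immediate from the definitions: since $X\mapsto R^{-1}XS$ is an $F$-linear bijection of $F^{r\times s}$, applying it to every element of the subspace $C(\A,\B)$ produces exactly the set $R^{-1}C(\A,\B)S=\{R^{-1}XS\mid X\in C(\A,\B)\}$, which is what $C(\A,\B)^{(R,S)}$ denotes. The substance of the lemma is therefore the second equality, $R^{-1}C(\A,\B)S=C(\A^R,\B^S)$.

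For that I would first prove the inclusion $R^{-1}C(\A,\B)S\subseteq C(\A^R,\B^S)$. Given $X\in C(\A,\B)$, so that $A_iX=XB_i$ for every $i\in I$, set $Y=R^{-1}XS$. Using the identities $(R_1X)^{(R,S)}=R_1^RX^{(R,S)}$ and $(XS_1)^{(R,S)}=X^{(R,S)}S_1^S$ recorded just before the lemma (or simply multiplying the matrices out), one computes
\[
  (R^{-1}A_iR)\,Y=R^{-1}A_iXS=R^{-1}XB_iS=Y\,(S^{-1}B_iS),
\]
so $Y\in C(\A^R,\B^S)$, giving the inclusion. For the reverse inclusion I would apply the inclusion just proved to the pair $(\A^R,\B^S)$ together with the element $(R^{-1},S^{-1})\in\GL(r,F)\times\GL(s,F)$: since conjugating $\A^R$ by $R^{-1}$ returns $\A$, and likewise $\B^S$ by $S^{-1}$ returns $\B$, this yields $RC(\A^R,\B^S)S^{-1}\subseteq C(\A,\B)$, and left-multiplying by $R^{-1}$ and right-multiplying by $S$ gives $C(\A^R,\B^S)\subseteq R^{-1}C(\A,\B)S$. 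Combining the two inclusions finishes the proof.

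There is no real obstacle here — the content is bookkeeping with the $(R,S)$-action, and the one point to watch is the asymmetric convention $X^{(R,S)}=R^{-1}XS$ (the inverse sits on the left factor only), so that the matrices are conjugated as $R^{-1}A_iR$ and $S^{-1}B_iS$ while the codewords are transformed with $R^{-1}$ on one side and $S$, not $S^{-1}$, on the other. One can also package both inclusions into a single observation: the map $X\mapsto R^{-1}XS$ restricts to an $F$-linear injection $C(\A,\B)\to C(\A^R,\B^S)$ whose two-sided inverse $Y\mapsto RYS^{-1}$ we have just checked sends $C(\A^R,\B^S)$ into $C(\A,\B)$, whence the restriction is a bijection and equality of the two sets follows at once.
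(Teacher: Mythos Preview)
Your proof is correct and follows essentially the same route as the paper's: the key step is the chain $A_i^R X^{(R,S)}=(A_iX)^{(R,S)}=(XB_i)^{(R,S)}=X^{(R,S)}B_i^S$, which the paper writes as a single line of equivalences giving both inclusions at once, whereas you unwind it into one inclusion and then recover the reverse by replacing $(R,S)$ with $(R^{-1},S^{-1})$.
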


\begin{proof}
For each $i\in I$, the equation $A_iX=XB_i$ is equivalent to
\[A_i^RX^{(R,S)}=(A_iX)^{(R,S)}=(XB_i)^{(R,S)}=X^{(R,S)}B_i^S.\qedhere\]
\end{proof}

Let $c_A(t)=\det(tI-A)$ be the characteristic polynomial of $A$.

\begin{lemma}\label{L4}
If $C(\A,\B)\ne\{0\}$, then $\gcd(c_{A_i}(t),c_{B_i}(t))\ne1$ for all $i\in I$.
\end{lemma}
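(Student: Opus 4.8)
The plan is to prove the contrapositive in a mild sense: assume $C(\A,\B)\ne\{0\}$ and deduce the gcd condition. Fix $i \in I$ and a nonzero $X \in C(\A,\B)$; then in particular $A_i X = X B_i$, so it suffices to prove the following statement for a single pair: if $A \in F^{r\times r}$, $B \in F^{s\times s}$, and there is a nonzero $X \in F^{r\times s}$ with $AX = XB$, then $c_A(t)$ and $c_B(t)$ share a nontrivial common factor. Working with one index at a time is legitimate because the conclusion is indexed by $i$, and membership in $C(\A,\B)$ implies membership in $C(A_i,B_i)$.

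To prove the one-pair statement, I would pass to the algebraic closure $\overline F$, which does not change characteristic polynomials or the condition $\gcd(c_A,c_B)\ne 1$ (two polynomials over $F$ have a nontrivial common factor iff they have a common root in $\overline F$). Over $\overline F$, let $\lambda$ be any eigenvalue of $B$ with a right eigenvector $v \in \overline F^s$, $v \ne 0$, $Bv = \lambda v$. From $AX = XB$ we get $A(Xv) = XBv = \lambda (Xv)$. If $Xv \ne 0$ for some eigenvector $v$ of $B$, then $\lambda$ is an eigenvalue of $A$ as well, hence a common root of $c_A$ and $c_B$, and we are done. The remaining case is that $X$ kills every eigenvector of $B$; then I would instead use a nonzero row: since $X \ne 0$, pick $w \in \overline F^r$ with $w^\top X \ne 0$, and run the same argument on the left using a left eigenvector of $A$. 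Concretely, choose an eigenvalue $\mu$ of $A$ with left eigenvector $u^\top A = \mu u^\top$; then $u^\top X B = u^\top A X = \mu\, u^\top X$, so if $u^\top X \ne 0$ it is a left eigenvector of $B$ for $\mu$, giving $\mu$ as a common root. The only way both arguments could fail is if $X$ annihilates every right eigenvector of $B$ and every left eigenvector of $A$ annihilates $X$; I need to rule this out.

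The cleanest way to finish — and the step I expect to be the main point rather than an obstacle — is to avoid the case split entirely by arguing on invariant subspaces. The image $\mathrm{Im}(X) \subseteq \overline F^r$ is a nonzero $A$-invariant subspace (since $A(Xy) = X(By) \in \mathrm{Im}(X)$), so it contains an eigenvector of $A$, say for eigenvalue $\mu$; thus $\mu$ is a root of $c_A$. Dually, $\ker(X) \subseteq \overline F^s$ is $B$-invariant, and its codimension equals $\mathrm{rank}(X) = \dim \mathrm{Im}(X) > 0$, so $B$ induces an operator on the nonzero quotient $\overline F^s / \ker(X)$; that induced operator is conjugate (via the injection induced by $X$) to the restriction of $A$ to $\mathrm{Im}(X)$, because $X$ descends to an isomorphism $\overline F^s/\ker(X) \xrightarrow{\sim} \mathrm{Im}(X)$ intertwining the induced $B$-action with $A|_{\mathrm{Im}(X)}$. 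Hence $A|_{\mathrm{Im}(X)}$ and $B|_{\overline F^s/\ker(X)}$ have equal characteristic polynomials, a nonconstant polynomial dividing both $c_A(t)$ and $c_B(t)$. Since this polynomial has positive degree, $\gcd(c_{A_i}(t), c_{B_i}(t)) \ne 1$, and as $i \in I$ was arbitrary the lemma follows. The only care needed is the linear-algebra bookkeeping that $X$ really does induce the claimed isomorphism of $\overline F$-vector spaces intertwining the two actions — routine, but worth spelling out.
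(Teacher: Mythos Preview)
Your final argument via invariant subspaces is correct and complete: $X$ induces an isomorphism $F^s/\ker(X)\to\mathrm{Im}(X)$ intertwining the action of $B$ on the quotient with $A|_{\mathrm{Im}(X)}$, so their common characteristic polynomial (of degree $\mathrm{rank}(X)\ge1$) divides both $c_A(t)$ and $c_B(t)$. Note that this step never actually uses the algebraic closure --- the invariant-subspace argument works verbatim over $F$ --- so the detour through $\overline{F}$ and the preliminary eigenvector discussion (which you correctly flag as incomplete) can simply be dropped.

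The paper takes a different, purely polynomial route: assuming $\gcd(c_{A_i}(t),c_{B_i}(t))=1$ for some $i$, write $f(t)c_{A_i}(t)+g(t)c_{B_i}(t)=1$ by B\'ezout, evaluate at $t=B_i$ to see that $c_{A_i}(B_i)$ is invertible, then use $A_iX=XB_i\Rightarrow c_{A_i}(A_i)X=Xc_{A_i}(B_i)$ together with Cayley--Hamilton to force $X=0$. This is shorter and avoids any discussion of subspaces or base change. Your approach, by contrast, is more structural and actually yields a little more information: it identifies an explicit common factor, namely the characteristic polynomial of $A$ on $\mathrm{Im}(X)$, in keeping with the module-homomorphism interpretation the paper alludes to just before the lemma.
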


\begin{proof}
Suppose that for some $i\in I$ we have $\gcd(c_{A_i}(t),c_{B_i}(t))=1$.
Then there exist polynomials $f(t),g(t)$ such that
$f(t)c_{A_i}(t)+g(t)c_{B_i}(t)=1$. Evaluating this equation at $t=B_i$, and noting that
$c_{B_i}(B_i)=0$, shows $f(B_i)c_{A_i}(B_i)=I$. Hence $c_{A_i}(B_i)$
is invertible. For $X\in C(\A,\B)$, we have $A_iX=XB_i$.
Thus $(\sum_{k\ge0}\alpha_kA_i^k)X=X(\sum_{k\ge0}\alpha_kB_i^k)$, for
all $\alpha_k\in F$, and therefore
$c_{A_i}(A_i)X=Xc_{A_i}(B_i)$. Since $c_{A_i}(A_i)=0$, post-multiplying by
$c_{A_i}(B_i)^{-1}$ shows that $X=0$, and hence $C(\A,\B)=\{0\}$.
\end{proof}

Henceforth when we wish to emphasize the field~$F$, we write $C_F(\A,\B)$.
Lemma 3.1 of~\cite{AGP}, in essence, says
$C_{\overline{F}}(\A,\B)=C_F(\A,\B)\otimes \overline{F}$.
This immediately yields Lemma~\ref{L2}.

\begin{lemma}\label{L2}
If $K$ is an extension field of $F$, then $\dim_F(C_F(\A,\B))=\dim_K(C_K(\A,\B))$.
In particular, $\dim_F(C_F(\A,\B))=\dim_{\overline{F}}(C_{\overline{F}}(\A,\B))$ where
$\overline{F}$ is the algebraic closure of $F$.
\end{lemma}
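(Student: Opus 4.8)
The plan is to reduce the statement to the cited fact that $C_{\overline F}(\A,\B)=C_F(\A,\B)\otimes_F\overline F$, or more precisely to the analogous statement for an arbitrary extension $K/F$, and then invoke the standard fact that tensoring with a field extension does not change the dimension of a vector space. First I would observe that $C_F(\A,\B)$ is the solution space of the homogeneous linear system $A_iX-XB_i=0$ ($i\in I$) in the $rs$ unknowns given by the entries of $X$; since the coefficients of this system lie in $F$, and since $C_K(\A,\B)$ is the solution space of the \emph{same} system viewed over $K$, the relation $C_K(\A,\B)=C_F(\A,\B)\otimes_F K$ is exactly the statement that the solution space of a linear system over $F$, extended by scalars to $K$, gives the solution space over $K$. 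Concretely, if we write the system as $MX=0$ for a matrix $M$ over $F$ (obtained by flattening $X$ into a column vector of length $rs$ and expressing each $A_iX-XB_i$ linearly), then $\dim_F\ker_F(M)=rs-\Rk(M)$ and $\dim_K\ker_K(M)=rs-\Rk(M)$, because the rank of a matrix over $F$ is unchanged under field extension (Gaussian elimination over $F$ already exhibits a maximal set of independent rows/columns, and these remain independent over $K$).

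Having established $\dim_F(C_F(\A,\B))=\dim_K(C_K(\A,\B))$ for every extension $K$ of $F$, the ``in particular'' clause follows by taking $K=\overline F$. So in fact I would structure the proof so that the general extension statement is proved first and the algebraic-closure case is an immediate corollary, rather than the other way around; this is cleaner and avoids having to pass through $\overline F$ as an intermediate step.

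The only genuine content is the rank-invariance claim, and it is entirely routine: it is the assertion that $\Rk$ computed over $F$ equals $\Rk$ computed over any extension field, which one proves either by the determinantal characterization of rank (the largest size of a nonvanishing minor is the same, since a minor that is nonzero over $F$ is nonzero over $K$, and conversely row-reduction over $F$ already produces a nonzero minor of size $\Rk_F(M)$) or simply by noting that row-reduced echelon form is computed using only the field operations of $F$ and hence is the same over $K$. The main ``obstacle'' is therefore purely expository: deciding whether to cite \cite{AGP}*{Lemma 3.1} directly (as the excerpt signals) or to give the two-line self-contained linear-algebra argument above. Given that the paper has already flagged the cited lemma, I would keep the proof very short: state that $C_K(\A,\B)=C_F(\A,\B)\otimes_F K$ holds because both sides are the kernel of the same $F$-matrix acting on $F^{rs}\otimes_F K=K^{rs}$, and conclude that the $F$-dimension of the former equals the $K$-dimension of the latter, with the algebraic-closure statement as the special case $K=\overline F$.
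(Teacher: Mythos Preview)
Your proposal is correct and follows essentially the same approach as the paper: the paper simply cites \cite{AGP}*{Lemma~3.1} for the identity $C_{\overline F}(\A,\B)=C_F(\A,\B)\otimes_F\overline F$ and declares the lemma immediate, whereas you unpack this into the self-contained rank-invariance argument (the kernel of an $F$-matrix has the same dimension over any extension). Your choice to prove the statement for arbitrary $K$ first and then specialize to $\overline F$ is a minor expository improvement over the paper, which only states the $\overline F$ version of the tensor identity before asserting the general claim.
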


Lemma \ref{L2}  allows us to assume that $F$ is algebraically closed,
which we shall do for the rest of this section.
Given $A\in F^{r\times r}$ and $B\in F^{s\times s}$ define $A\oplus B$ to be
the block diagonal matrix
$\left(\begin{smallmatrix}A&0\\0&B\end{smallmatrix}\right)$,
and define $\A\oplus\B$ to be
$\{A_i\oplus B_i\mid i\in I\}\subseteq F^{(r+s)\times(r+s)}$.

\begin{lemma}\label{L3}
If $\A_1\subseteq F^{r_1\times r_1},\dots,\A_m\subseteq F^{r_m\times r_m}$ and
$\B_1\subseteq F^{s_1\times s_1},\dots,\B_n\subseteq F^{s_n\times s_n}$
are subsets, all with the same finite cardinality, then
\[
  C\left(\bigoplus_{i=1}^m\A_i,\bigoplus_{j=1}^n\B_j\right)\cong
  \bigoplus_{i=1}^m\bigoplus_{j=1}^nC(\A_i,\B_j).
\]
\end{lemma}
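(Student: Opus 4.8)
The plan is to reduce everything to the case $m=n=1$ and then observe that the general statement follows by splitting an intertwining matrix into blocks. First I would set $r=\sum_{i=1}^m r_i$ and $s=\sum_{j=1}^n s_j$, so that $\bigoplus_i\A_i\subseteq F^{r\times r}$ and $\bigoplus_j\B_j\subseteq F^{s\times s}$. Write an arbitrary $X\in F^{r\times s}$ in block form $X=(X_{ij})$ where $X_{ij}\in F^{r_i\times s_j}$, the blocks being determined by the decompositions $r=\sum r_i$ and $s=\sum s_j$. For each index $\ell$ in the common index set $I$, the matrix $(\bigoplus_i\A_i)_\ell=\bigoplus_i(A_i)_\ell$ is block diagonal with blocks $(A_i)_\ell$, and similarly $(\bigoplus_j\B_j)_\ell=\bigoplus_j(B_j)_\ell$. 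Carrying out the block multiplication, the $(i,j)$ block of $(\bigoplus_i\A_i)_\ell\,X$ is $(A_i)_\ell\,X_{ij}$, while the $(i,j)$ block of $X\,(\bigoplus_j\B_j)_\ell$ is $X_{ij}\,(B_j)_\ell$.

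Hence the single matrix equation $(\bigoplus_i\A_i)_\ell\,X=X\,(\bigoplus_j\B_j)_\ell$ holds if and only if $(A_i)_\ell\,X_{ij}=X_{ij}\,(B_j)_\ell$ holds for every pair $(i,j)$. Letting $\ell$ range over $I$, this says exactly that $X\in C\bigl(\bigoplus_i\A_i,\bigoplus_j\B_j\bigr)$ if and only if $X_{ij}\in C(\A_i,\B_j)$ for all $i,j$. The assignment $X\mapsto (X_{ij})_{i,j}$ is therefore an $F$-linear bijection from $C\bigl(\bigoplus_i\A_i,\bigoplus_j\B_j\bigr)$ onto $\bigoplus_{i=1}^m\bigoplus_{j=1}^n C(\A_i,\B_j)$, which is the claimed isomorphism. (The common finite cardinality of all the $\A_i$ and $\B_j$ is what lets us index everything by a single set $I$, so that there really is one equation per $\ell\in I$; without it the statement would not even typecheck.)

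I do not expect any serious obstacle here: the proof is a bookkeeping argument about block-matrix multiplication, and the only thing to be careful about is the indexing — making sure the row blocks are cut according to $r_1,\dots,r_m$ and the column blocks according to $s_1,\dots,s_n$, so that $X_{ij}$ lands in $F^{r_i\times s_j}$ and the products $(A_i)_\ell X_{ij}$ and $X_{ij}(B_j)_\ell$ are conformable. One may want to remark that the $\cong$ in the statement is an isomorphism of $F$-vector spaces (so that, passing to dimensions, $\dim_F C\bigl(\bigoplus_i\A_i,\bigoplus_j\B_j\bigr)=\sum_{i,j}\dim_F C(\A_i,\B_j)$), since this is the form in which the lemma will be used together with Lemma~\ref{L2} to compute $\dim_F(C(\A,\B))$ via Jordan forms over $\overline{F}$.
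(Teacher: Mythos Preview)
Your proposal is correct and follows exactly the paper's approach: write $X=(X_{ij})$ as a block matrix with $X_{ij}\in F^{r_i\times s_j}$ and observe that the intertwining condition on $X$ is equivalent to the blockwise conditions $X_{ij}\in C(\A_i,\B_j)$. The paper's own proof is the same argument compressed into two sentences; your version simply spells out the block multiplication in more detail.
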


\begin{proof}
Write $X=(X_{ij})$ as a block matrix where $X_{ij}$ has size $r_i\times s_j$.
The condition $X\in C\left(\bigoplus_{i=1}^m\A_i,\bigoplus_{j=1}^n\B_j\right)$
is equivalent to $X_{ij}\in C(\A_i,\B_j)$ for each $i,j$.
\end{proof}

\begin{corollary}\label{C4.5}
Suppose that $\A_1,\dots,\A_m$ and $\B_1,\dots,\B_n$ are as in Lemma~$\ref{L3}$,
and suppose that
for $i\ne j$, the characteristic polynomials
of matrices in $\A_i$ are coprime to the characteristic polynomials of matrices
in $\B_j$. Then
\[
  C\left(\bigoplus_{i=1}^m\A_i,\bigoplus_{j=1}^n\B_j\right)\cong
  \bigoplus_{i=1}^{\min\{m,n\}} C(\A_i,\B_i).
\]
\end{corollary}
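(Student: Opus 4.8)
The plan is to deduce the corollary from Lemma~\ref{L3} by discarding, one at a time, every summand $C(\A_i,\B_j)$ whose characteristic polynomials are coprime, and then re-indexing what survives. First I would invoke Lemma~\ref{L3} to obtain the isomorphism $C\left(\bigoplus_{i=1}^m\A_i,\bigoplus_{j=1}^n\B_j\right)\cong\bigoplus_{i=1}^m\bigoplus_{j=1}^nC(\A_i,\B_j)$. For a fixed pair $(i,j)$ with $i\ne j$, the hypothesis says that for every $\ell\in I$ the characteristic polynomials of the $\ell$-th matrix in $\A_i$ and the $\ell$-th matrix in $\B_j$ are coprime; in particular this holds for at least one $\ell$, so $\gcd(c_{A}(t),c_{B}(t))=1$ for some $A\in\A_i$, $B\in\B_j$. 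By the contrapositive of Lemma~\ref{L4}, this forces $C(\A_i,\B_j)=\{0\}$. Hence in the double sum only the diagonal terms $C(\A_i,\B_i)$ with $1\le i\le\min\{m,n\}$ can be nonzero, and dropping zero summands from a direct sum yields
\[
  C\left(\bigoplus_{i=1}^m\A_i,\bigoplus_{j=1}^n\B_j\right)\cong
  \bigoplus_{i=1}^{\min\{m,n\}} C(\A_i,\B_i),
\]
as claimed.

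The bulk of the work is already done by Lemma~\ref{L3} and Lemma~\ref{L4}, so there is no genuine obstacle here; the corollary is essentially a bookkeeping consequence. The one point that needs a little care is the direction in which Lemma~\ref{L4} is applied: the lemma is stated as ``$C(\A_i,\B_j)\ne\{0\}$ implies a shared factor for \emph{all} $\ell\in I$,'' whereas what we need is ``a coprime pair for \emph{some} $\ell$ implies $C(\A_i,\B_j)=\{0\}$.'' These match up correctly because the hypothesis of the corollary supplies coprimality for all $\ell$ (in particular for some $\ell$), and the contrapositive of Lemma~\ref{L4} says that a single coprime index $\ell$ already kills the code. I would also note that when $m=n$ the right-hand side is exactly the full diagonal, and when $m\ne n$ the ``extra'' blocks $\A_i$ (or $\B_j$) with index exceeding $\min\{m,n\}$ contribute only off-diagonal summands, all of which vanish, so they do not appear on the right.

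If one wanted the statement to be purely about dimensions rather than about an isomorphism of spaces, the same argument gives $\dim_F C\left(\bigoplus_i\A_i,\bigoplus_j\B_j\right)=\sum_{i=1}^{\min\{m,n\}}\dim_F C(\A_i,\B_i)$ by taking dimensions throughout; combined with Lemma~\ref{L2} this dimension count is independent of the field, which is the form in which the corollary will be used in the next section.
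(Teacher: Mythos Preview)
Your proposal is correct and follows exactly the paper's approach: apply Lemma~\ref{L3} to obtain the double direct sum, then use Lemma~\ref{L4} to kill the off-diagonal summands $C(\A_i,\B_j)$ for $i\ne j$. The paper's proof is the one-line version of what you wrote; your extra care about the contrapositive direction of Lemma~\ref{L4} and the handling of $m\ne n$ is accurate but not strictly needed.
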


\begin{proof}
Use Lemma~\ref{L3}, and note that $C(\A_i,\B_j)=\{0\}$ for
$i\ne j$ by Lemma~\ref{L4}.
\end{proof}

\begin{remark}\label{R6}
  Let $F$ be a finite field.
  Standard arguments, for example~\cite{S}*{p.\,168}, can be used to
  relate $\dim_{\overline{F}}(C_{\overline{F}}(\A,\B))$ to data computed over~$F$.
  This remark and Remark~\ref{R10} explain the details.
  Let $p_1(t),p_2(t),\dots$ enumerate the (monic) irreducible polynomials
  over $F$ and write $c_A(t)=\prod_{i\ge1}p_i(t)^{k_i}$ and
  $c_B(t)=\prod_{i\ge1}p_i(t)^{\ell_i}$, respectively. This gives rise to
  the $A$-invariant primary decomposition
  $F^r=\bigoplus_{i\ge1} \ker(p_i(A)^{k_i})$, and
  the $B$-invariant decomposition $F^s=\bigoplus_{i\ge1} \ker(p_i(A)^{\ell_i})$.
  Let $A_i$ be the restriction of $A$ to $\ker(p_i(A)^{k_i})$ and $B_i$
  the restriction of $B$ to $\ker(p_i(A)^{\ell_i})$. Corollary~\ref{C4.5}
  shows that  $\dim(C(A,B))=\sum_{i\ge1}\dim(C(A_i,B_i))$. The second ingredient
  involves partitions and is described in Remark~\ref{R10}.
\end{remark}

It is straightforward to see that $C(\A,\B)=\bigcap_{i\in I} C(A_i,B_i)$
where $C(A_i,B_i)$ means $C(\{A_i\},\{B_i\})$. Recall that a matrix
$A\in F^{r\times r}$ is \emph{nilpotent} if and only if $A^r=0$. We say that
$A$ is \emph{$\alpha$-potent}, where $\alpha\in F$, if $(A-\alpha I)^r=0$.
The following lemma and theorem reduce our deliberations from
$\alpha$-potent matrices to nilpotent matrices. For
$\A=\{A_i\mid i\in I\}\subseteq F^{r\times r}$, let $\A-\alpha I_r$ denote the
set $\{A_i-\alpha I_r\mid i \in I\}$.

\begin{lemma}\label{L5}
If $\A\subseteq F^{r\times r}$, $\B\subseteq F^{s\times s}$ and $\alpha\in F$, then
$C(\A,\B)=C(\A-\alpha I_r,\B-\alpha I_s)$.
\end{lemma}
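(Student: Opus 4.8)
The plan is to reduce the claimed equality of the two codes to a single elementary matrix identity, verified one index at a time. First I would fix $X\in F^{r\times s}$ and $i\in I$, and exploit the fact that $\alpha I_r$ and $\alpha I_s$ are \emph{scalar} matrices, so they can be slid freely between the left and right of $X$: namely $(\alpha I_r)X=\alpha X=X(\alpha I_s)$. Consequently
\[
  (A_i-\alpha I_r)X - X(B_i-\alpha I_s) = A_iX - \alpha X - XB_i + \alpha X = A_iX - XB_i,
\]
so the equation $A_iX=XB_i$ holds if and only if $(A_i-\alpha I_r)X=X(B_i-\alpha I_s)$. In other words, $C(A_i,B_i)=C(A_i-\alpha I_r,B_i-\alpha I_s)$ for every $i\in I$.

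Next I would invoke the observation recorded just before the statement, that $C(\A,\B)=\bigcap_{i\in I}C(A_i,B_i)$, and similarly $C(\A-\alpha I_r,\B-\alpha I_s)=\bigcap_{i\in I}C(A_i-\alpha I_r,B_i-\alpha I_s)$. Intersecting the per-index equalities from the previous paragraph over all $i\in I$ then yields $C(\A,\B)=C(\A-\alpha I_r,\B-\alpha I_s)$, as required.

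There is essentially no obstacle: the only subtlety, which the proof must make explicit, is that the translation is by a scalar matrix, so it commutes with multiplication on either side of $X$; the same manipulation would break down for a non-central perturbation. (One could alternatively phrase this via the conjugation/translation action, but the direct entrywise check above is shortest.)
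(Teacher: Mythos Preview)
Your proof is correct and follows essentially the same approach as the paper, which is the one-line observation that $A_iX=XB_i$ holds if and only if $(A_i-\alpha I_r)X=X(B_i-\alpha I_s)$; you have simply made the cancellation of $\alpha X$ and the intersection over $i\in I$ explicit.
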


\begin{proof}
For $i\in I$, $A_iX=XB_i$ holds if and only if $(A_i-\alpha I_r)X=X(B_i-\alpha I_s)$.
\end{proof}

A \emph{partition} $\lambda$ of $r$, written $\lambda\vdash r$,
is a sequence $\lambda=(\lambda_1,\lambda_2,\dots)$ of integers satisfying
\[
  \lambda_1\ge\lambda_2\ge\cdots\ge0
  \quad\textup{and}\quad \lambda_1+\lambda_2+\cdots=r.
\]
We call $\lambda_i$ the $i$th part of $\lambda$, and we usually omit parts of size zero.
 Let $N_r$ be the $r\times r$ nilpotent matrix
with all entries 0 except for an entry 1 in position $(i,i+1)$
for $1\le i<r$. Let $N_\lambda=\bigoplus N_{\lambda_i}$ where $\lambda\vdash r$.
Every nilpotent $r\times r$ matrix is conjugate to some $N_\lambda$
for a unique $\lambda\vdash r$. Furthermore, if an $r\times r$ matrix $R$
has eigenvalues $\rho_1,\dots,\rho_m$ and associated generalized eigenspaces
of dimensions $r_1,\dots,r_m$ where $r_1+\cdots+ r_m=r$, then $R$ has
Jordan form $\bigoplus_{i=1}^m (\rho_iI_{r_i}+N_{\lambda_i})$ where $\lambda_i$ is
a \emph{partition} of $r_i$ (not a part of a partition).

\begin{theorem}\label{T}
Suppose $A\in F^{r\times r}$, $B\in F^{s\times s}$ and $\gcd(c_A(t),c_B(t))$
has distinct roots $\zeta_1,\dots,\zeta_m$ in $\overline{F}$. Suppose that the
sizes of the Jordan blocks of $A$ associated with the generalized
$\zeta_i$-eigenspace of $A$
determine a partition $\alpha_i$, and the sizes of the Jordan blocks of $B$
associated with the generalized $\zeta_i$-eigenspace of $B$ determine
a partition $\beta_i$. Then
\[
  \dim(C(A,B))=\sum_{i=1}^m \dim(C(N_{\alpha_i},N_{\beta_i})).
\]
\end{theorem}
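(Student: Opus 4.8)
The plan is to reduce to the nilpotent case by combining the primary decomposition (as in Remark~\ref{R6}) with the shift trick of Lemma~\ref{L5}, and then to invoke Corollary~\ref{C4.5} to discard the off-diagonal summands. Since we have arranged (via Lemma~\ref{L2}) that $F$ is algebraically closed, the distinct roots $\zeta_1,\dots,\zeta_m$ of $\gcd(c_A(t),c_B(t))$ are exactly the eigenvalues common to $A$ and $B$. First I would write the full lists of eigenvalues of $A$ as $\zeta_1,\dots,\zeta_m,\eta_1,\dots,\eta_p$ and of $B$ as $\zeta_1,\dots,\zeta_m,\theta_1,\dots,\theta_q$, where the $\eta$'s are the eigenvalues of $A$ not shared with $B$ and the $\theta$'s are the eigenvalues of $B$ not shared with $A$. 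Conjugating $A$ and $B$ into Jordan form---which by Lemma~\ref{L1} does not change the dimension of the intertwining code---puts $A$ into the block-diagonal shape $\bigl(\bigoplus_{i=1}^m(\zeta_iI+N_{\alpha_i})\bigr)\oplus\bigl(\bigoplus_{j=1}^p(\eta_jI+N_{\mu_j})\bigr)$ for suitable partitions $\alpha_i$ of the generalized $\zeta_i$-eigenspace dimensions and $\mu_j$ of the generalized $\eta_j$-eigenspace dimensions, and similarly for $B$ with partitions $\beta_i$ and $\nu_\ell$.

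Next I would apply Corollary~\ref{C4.5} with the $\A_i$'s being the singleton sets of these Jordan blocks of $A$ (indexed by distinct eigenvalues) and the $\B_j$'s the singleton sets of Jordan blocks of $B$. The hypothesis of Corollary~\ref{C4.5} is satisfied because two Jordan blocks with distinct eigenvalues have coprime characteristic polynomials (powers of distinct linear factors $t-\lambda$ over the algebraically closed field $F$). The corollary then collapses the direct sum over all pairs to the direct sum over matching eigenvalues; but the only eigenvalues common to both lists are $\zeta_1,\dots,\zeta_m$, so
\[
  \dim(C(A,B))=\sum_{i=1}^m\dim\bigl(C(\zeta_iI_{r_i}+N_{\alpha_i},\,\zeta_iI_{s_i}+N_{\beta_i})\bigr),
\]
where $r_i,s_i$ are the relevant generalized eigenspace dimensions. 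Finally, Lemma~\ref{L5} with $\alpha=\zeta_i$ gives $C(\zeta_iI+N_{\alpha_i},\zeta_iI+N_{\beta_i})=C(N_{\alpha_i},N_{\beta_i})$, which yields the claimed formula.

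The only real subtlety is bookkeeping: one must be careful that ``the sizes of the Jordan blocks of $A$ associated with the generalized $\zeta_i$-eigenspace determine the partition $\alpha_i$'' is precisely the statement that the $\zeta_i$-part of the Jordan form of $A$ is $\zeta_iI_{r_i}+N_{\alpha_i}$, so that the hypotheses of the theorem match the data produced by putting $A$ and $B$ in Jordan form. I would also note explicitly that Corollary~\ref{C4.5} requires all the subsets in play to have the same cardinality, which here is the trivial cardinality $1$ since $|I|=1$; and that the isomorphism in Corollary~\ref{C4.5} is an isomorphism of $F$-vector spaces, which is all we need to equate dimensions. No step presents a genuine obstacle---the work has already been done in Lemmas~\ref{L1}, \ref{L2}, \ref{L5} and Corollary~\ref{C4.5}---so the proof is essentially an assembly of these ingredients.
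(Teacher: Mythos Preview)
Your proposal is correct and follows essentially the same route as the paper: reduce to the algebraic closure via Lemma~\ref{L2}, pass to Jordan form, apply Corollary~\ref{C4.5} to kill the cross terms, and use Lemma~\ref{L5} to strip off the scalars $\zeta_i$. You are somewhat more explicit than the paper about the eigenvalues of $A$ and $B$ that are \emph{not} common (your $\eta_j$ and $\theta_\ell$), which the paper's proof suppresses, but the argument is the same.
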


\begin{proof}
By Lemma~\ref{L2} we may assume that $F=\overline{F}$.
Let $A_i$ be the restriction of $A$ to its generalized $\zeta_i$-eigenspace
$\{v\mid v(A-\zeta_i I)^k\textup{ for some $k\ge0$}\}$. Then $A_i$
is $\zeta_i$-potent, and so determines a partition $\alpha_i$.
Similarly, the restriction $B_i$ of $B$ to the $\zeta_i$-eigenspace determines
a partition $\beta_i$. By Corollary~\ref{C4.5} and Lemma~\ref{L5}, we have
\[
  \dim(C(A,B))=\sum_{i=1}^m\dim(C(A_i,B_i))
  =\sum_{i=1}^m\dim(C(N_{\alpha_i},N_{\beta_i})).\qedhere
\]
\end{proof}


\begin{theorem}\label{T6}
Given partitions $\lambda$ of $r$ and $\mu$ of $s$, the dimension
of $C(N_\lambda,N_\mu)$ equals
\[
  \dim(C(N_\lambda,N_\mu))=\sum_{i\ge1}\sum_{j\ge1}\min\{\lambda_i,\mu_j\}.
\]
\end{theorem}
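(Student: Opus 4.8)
The plan is to reduce to the case of two single Jordan blocks via the block decomposition of Lemma~\ref{L3}, and then to settle that case by a direct entrywise analysis. Writing $N_\lambda=\bigoplus_i N_{\lambda_i}$ and $N_\mu=\bigoplus_j N_{\mu_j}$ (zero parts may be discarded, since they contribute $0$ to both sides of the asserted identity), Lemma~\ref{L3} gives the vector-space isomorphism $C(N_\lambda,N_\mu)\cong\bigoplus_i\bigoplus_j C(N_{\lambda_i},N_{\mu_j})$. Hence it suffices to prove that for single nilpotent Jordan blocks $N_a\in F^{a\times a}$ and $N_b\in F^{b\times b}$ one has $\dim(C(N_a,N_b))=\min\{a,b\}$; taking dimensions in the displayed isomorphism then yields the claimed formula.

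To handle the single-block case I would write out $N_aX=XN_b$ entrywise. Since $(N_a)_{i,i+1}=1$, left multiplication by $N_a$ shifts the rows of $X$ upward and deletes the last row, while right multiplication by $N_b$ shifts the columns of $X$ to the right and deletes the first column; comparing entries turns $N_aX=XN_b$ into the recursion $X_{i+1,j}=X_{i,j-1}$, valid whenever both indices lie in range, together with the boundary conditions $X_{i+1,1}=0$ for $1\le i<a$ and $X_{a,j-1}=0$ for $1<j\le b$. The recursion forces $X$ to be constant on each of the $a+b-1$ diagonals $D_c=\{(i,j)\mid i-j=c\}$, $c\in\{1-b,\dots,a-1\}$; the first family of boundary conditions annihilates $X$ on $D_1\cup\cdots\cup D_{a-1}$ and the second annihilates it on $D_{a-1}\cup D_{a-2}\cup\cdots\cup D_{a-b+1}$, and the union of these two sets has exactly $\max\{a,b\}-1$ diagonals. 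The remaining $a+b-1-(\max\{a,b\}-1)=\min\{a,b\}$ diagonals are unconstrained apart from being constant, so $\dim(C(N_a,N_b))=\min\{a,b\}$, as required.

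The only delicate point is the index bookkeeping in the second paragraph: checking that the two boundary families kill precisely the diagonals listed, that no diagonal is over-determined, and that the two cases $a\le b$ and $a\ge b$ both leave $\min\{a,b\}$ free diagonals (a small example such as $(a,b)=(3,2)$ makes this transparent). As an independent check one can instead note that, reading $X$ as the matrix of a linear map, $N_aX=XN_b$ says exactly that $X$ is an $F[t]$-module homomorphism from $F^b$ with $t$ acting as $N_b$ to $F^a$ with $t$ acting as $N_a$; thus $C(N_a,N_b)\cong\mathrm{Hom}_{F[t]}\bigl(F[t]/(t^b),\,F[t]/(t^a)\bigr)$, and such a homomorphism is determined by the image of $\bar1$, which may be any element of $F[t]/(t^a)$ killed by $t^b$ --- a subspace of dimension $\min\{a,b\}$.
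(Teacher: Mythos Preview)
Your proof is correct and follows essentially the same route as the paper: reduce via Lemma~\ref{L3} to the single-block case $C(N_a,N_b)$, then compute $\dim(C(N_a,N_b))=\min\{a,b\}$ either by solving $N_aX=XN_b$ entrywise or by a module-theoretic $\mathrm{Hom}$ argument. You have simply fleshed out the diagonal-counting bookkeeping that the paper leaves to the reader, and your $\mathrm{Hom}_{F[t]}\bigl(F[t]/(t^b),F[t]/(t^a)\bigr)$ formulation is a slightly more explicit version of the paper's uniserial-module remark.
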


\begin{proof}
As $\lambda\vdash r$ and $\mu\vdash s$, we have $\sum_{i\ge1}\lambda_i=r$
and $\sum_{j\ge1}\mu_j=s$. Lemma~\ref{L3} shows that
$C(N_\lambda,N_\mu)\cong\bigoplus_{i\ge1}\bigoplus_{j\ge1}C(N_{\lambda_i},N_{\mu_j})$.
Taking dimensions, it suffices to show
$\dim(C(N_{\lambda_i},N_{\mu_j}))=\min\{\lambda_i,\mu_j\}$. This can be shown
by solving $N_{\lambda_i}X=XN_{\mu_j}$ for~$X$ and counting the number of free variables. Alternatively, $F^{\lambda_i}$ is a
uniserial $\langle N_{\lambda_i}\rangle$-module
with 1-dimensional quotient modules, and similarly for $F^{\lambda_j}$.
As their largest common quotient module is $F^{\min\{\lambda_i,\lambda_j\}}$, we have
$\dim(C(N_{\lambda_i},N_{\lambda_j}))=\min\{\lambda_i,\lambda_j\}$.
\end{proof}

\begin{remark}\label{R10}
  Suppose $|F|=q$ is finite.
  Following on from Remark~\ref{R6} it suffices to consider the case where
  $c_A(t)=p(t)^{r/d}$, $c_B(t)=p(t)^{s/d}$, where $p(t)$ is irreducible over $F$
  of degree~$d$. The field $K:=F[t]/(p(t))$ has order $q^d$.
  In this case the structure of the modules $F^r=K^{r/s}$ and $F^s=K^{s/d}$
  is determined by partitions $\lambda\vdash r/d$ and $\mu\vdash s/d$.
  It turns out that $A$ is conjugate (see below) to
  $N_{\lambda,p}:=\textrm{diag}(N_{\lambda_1,p},N_{\lambda_2,p},\dots)\in F^{r\times r}$ where
  $N_{m,p}=\left(\begin{smallmatrix}C(p)&I&& \\ &\ddots&\ddots&\\&&C(p)&I\\&&&C(p)\end{smallmatrix}\right)\in F^{dm\times dm}$ and
  $C(p)\in F^{d\times d}$ is the companion matrix of~$p(t)$.
  Now $C(p)$ is conjugate in $\GL(d,K)$ to $\textrm{diag}(\zeta_1,\dots,\zeta_d)$ where $\zeta_1,\dots,\zeta_d$ are the (distinct) roots of $p(t)$ in $K$.
  It follows from Theorems~\ref{T6} and~\ref{T} that 
  \begin{equation}\label{E10}
    \dim(C(A,B))=\dim(C(N_{\lambda,p},N_{\mu,p}))
    =d\sum_{i\ge1}\sum_{j\ge1}\min\{\lambda_i,\mu_j\}.
  \end{equation}
  As an example, suppose $A$ is cyclic and $c_A(t)=p(t)^3$ where $d=\deg(p)=3$.
  In this case $r=9$ and $\lambda=(3)$. Write 
  $p(t)=t^3+p_2t^2+p_1t+p_0=(t-\zeta_1)(t-\zeta_2)(t-\zeta_3)$.
  Then $A$ is conjugate in $\GL(9,F)$ by~\cite{R} to 
  $\left(\begin{smallmatrix}C(p)&N&0\\0&C(p)&N\\0&0&C(p)\end{smallmatrix}\right)$
  where
  $C(p)=\left(\begin{smallmatrix}0&1&0\\0&0&1\\-p_0&-p_1&-p_2\end{smallmatrix}\right)$ and 
  $N=\left(\begin{smallmatrix}0&0&0\\0&0&0\\0&0&1\end{smallmatrix}\right)$.
  As $p(t)$ is separable,~\cite{R}*{Theorem~1} implies
  that $A$ is conjugate in $\GL(9,F)$ to
  $\left(\begin{smallmatrix}C(p)&I&0\\0&C(p)&I\\0&0&C(p)\end{smallmatrix}\right)$.
  Hence $A$ is conjugate in $\GL(9,K)$ to
  $\left(\begin{smallmatrix}D(\zeta_1)&0&0\\0&D(\zeta_2)&0\\0&0&D(\zeta_3)\end{smallmatrix}\right)$ where 
  $D(\zeta)=\left(\begin{smallmatrix}\zeta&1&0\\0&\zeta&1\\0&0&\zeta\end{smallmatrix}\right)$. This explains the factor of $d=\deg(p(t))$ in equation~\eqref{E10} and relates the generalized Jordan form of $A$ over $F$ to the Jordan form of $A$ over $K$.
\end{remark}

\section{Conjugate Partitions}\label{S3.5}

In this section we simplify the formula in Theorem~\ref{T6}
for $\dim(C(N_\lambda,N_\mu))$. We prove an identity in Lemma~\ref{L7.5}
involving partitions which replaces multiple sums by a single sum.
In order to state the simpler dimension formula we need to define `conjugate partitions'.
The \emph{conjugate} of $\lambda\vdash r$ is the partition
$\lambda'=(\lambda_1', \lambda_2', \dots)$ of $r$ whose parts satisfy
$\lambda'_i=|\{j\mid \lambda_j\ge i\}|$, for each $i$.
The Young diagram of $\lambda'$, is obtained from that of $\lambda$ by swapping
rows and columns as shown in Figure~\ref{F:YD}.
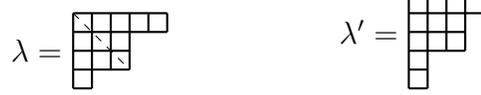
\begin{figure}[!ht]
  \caption{Young diagrams for $\lambda=(5,3,3,1)$ and
    $\lambda'=(4,3,3,1,1)$.}\label{F:YD}
  \begin{center}
  \begin{tikzpicture}[scale=0.25]
    \coordinate [label=left:\textcolor{black}{$\lambda=$}] (A) at (0,2);
    \draw[thick,black](0,0)--(0,4);
    \draw[thick,black](1,0)--(1,4);\draw[thick,black](2,1)--(2,4);
    \draw[thick,black](3,1)--(3,4);
    \draw[thick,black](4,3)--(4,4);\draw[thick,black](5,3)--(5,4);
    \draw[thick,black](0,0)--(1,0);\draw[thick,black](0,1)--(3,1);
    \draw[thick,black](0,2)--(3,2);\draw[thick,black](0,3)--(5,3);
    \draw[thick,black](0,4)--(5,4);\draw[dashed] (0,4)--(3,1);
  \end{tikzpicture}
  \hskip20mm
  \begin{tikzpicture}[scale=0.25]
    \coordinate [label=left:\textcolor{black}{$\lambda'=$}] (A) at (0,3);
    \draw[thick,black](0,0)--(0,5);\draw[thick,black](1,0)--(1,5);
    \draw[thick,black](2,2)--(2,5);\draw[thick,black](3,2)--(3,5);
    \draw[thick,black](4,4)--(4,5);
    \draw[thick,black](0,0)--(1,0);\draw[thick,black](0,1)--(1,1);
    \draw[thick,black](0,2)--(3,2);\draw[thick,black](0,3)--(3,3);
    \draw[thick,black](0,4)--(4,4);\draw[thick,black](0,5)--(4,5);
  \end{tikzpicture}
  \end{center}
\end{figure}

For the following result, note that
the number of nonzero $\lambda_i$ is $\lambda'_1$, and 
$r=\sum_{i=1}^{\lambda'_1}\lambda_i$.

\begin{theorem}\label{T6.5}
Given partitions $\lambda$ of $r$ and $\mu$ of $s$, the dimension
of $C(N_\lambda,N_\mu)$ equals
\[
  \dim(C(N_\lambda,N_\mu))
  =\sum_{i\ge1}\lambda'_i\mu'_i=\sum_{i=1}^{\min\{\lambda_1,\mu_1\}}\lambda'_i\mu'_i.
\]
\end{theorem}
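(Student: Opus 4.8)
The plan is to deduce Theorem~\ref{T6.5} from Theorem~\ref{T6} by proving the purely combinatorial identity
\[
  \sum_{i\ge1}\sum_{j\ge1}\min\{\lambda_i,\mu_j\}=\sum_{i\ge1}\lambda'_i\mu'_i,
\]
which is exactly what Lemma~\ref{L7.5} (referenced in Section~\ref{S3.5}) is set up to supply. So strictly speaking the proof of the theorem is one line: invoke Theorem~\ref{T6} for the first equality and Lemma~\ref{L7.5} for the second, and then observe that $\lambda'_i=0$ once $i>\lambda_1$ (and $\mu'_i=0$ once $i>\mu_1$), so the sum $\sum_{i\ge1}\lambda'_i\mu'_i$ truncates at $i=\min\{\lambda_1,\mu_1\}$. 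I will present that truncation remark explicitly since it is the second displayed equality in the statement.

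For the combinatorial identity itself — which is the real content — the cleanest approach is a double-counting argument on lattice points. The key observation is that for nonnegative integers $a,b$ one has $\min\{a,b\}=|\{i\ge1\mid i\le a\text{ and }i\le b\}|$. Therefore
\[
  \sum_{i\ge1}\sum_{j\ge1}\min\{\lambda_i,\mu_j\}
  =\bigl|\{(i,j,\ell)\mid \ell\le\lambda_i,\ \ell\le\mu_j\}\bigr|
  =\sum_{\ell\ge1}\bigl|\{i\mid\lambda_i\ge\ell\}\bigr|\cdot\bigl|\{j\mid\mu_j\ge\ell\}\bigr|.
\]
Now recognize $|\{i\mid\lambda_i\ge\ell\}|=\lambda'_\ell$ and $|\{j\mid\mu_j\ge\ell\}|=\mu'_\ell$ directly from the definition of the conjugate partition given just above Theorem~\ref{T6.5}. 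This collapses the triple sum to $\sum_{\ell\ge1}\lambda'_\ell\mu'_\ell$, which is the identity. An equivalent phrasing, which might read more cleanly in the paper, is in terms of Young diagrams: $\min\{\lambda_i,\mu_j\}$ counts the cells in the overlap of the $i$th row of $\lambda$'s diagram with the $j$th row of $\mu$'s diagram, summing over all pairs of rows counts, for each column index $\ell$, the product of (number of rows of $\lambda$ reaching column $\ell$) and (number of rows of $\mu$ reaching column $\ell$), and those two counts are the $\ell$th parts of $\lambda'$ and $\mu'$.

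I do not anticipate a genuine obstacle here; the only thing to be careful about is bookkeeping with the ranges of the indices — making sure the switch in order of summation is legitimate (all terms are nonnegative integers and all sums are finite since $\lambda,\mu$ have finitely many nonzero parts, so Tonelli-type concerns are vacuous) and making sure the finite cutoff $\min\{\lambda_1,\mu_1\}$ is stated with the correct inequality direction. If the paper prefers to keep Lemma~\ref{L7.5} as the abstract partition identity and prove it separately, then the proof of Theorem~\ref{T6.5} is purely a citation of Theorem~\ref{T6} plus Lemma~\ref{L7.5} plus the truncation remark, and the lattice-point argument above is what I would put into the proof of Lemma~\ref{L7.5} instead.
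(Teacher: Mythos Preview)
Your proposal is correct, and for Theorem~\ref{T6.5} itself it is exactly what the paper does: the paper's proof is the one-line ``Apply Theorem~\ref{T6} and Lemma~\ref{L7.5} with $k=2$,'' which is precisely your citation of Theorem~\ref{T6} plus Lemma~\ref{L7.5}, with the truncation at $\min\{\lambda_1,\mu_1\}$ built into the statement of Lemma~\ref{L7.5}.

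Where you diverge from the paper is in how you would prove the underlying partition identity. The paper proves Lemma~\ref{L7.5} by induction on $\lambda_1$: one strips off the first column of each Young diagram (replacing $\lambda$ by $\widehat{\lambda}$ with $\widehat{\lambda}_i=\lambda_i-1$, so that $\widehat{\lambda}'_i=\lambda'_{i+1}$), applies the inductive hypothesis, and then restores the term $\lambda'_1\mu'_1\cdots\omega'_1$. Your double-counting argument, writing $\min\{\lambda_i,\mu_j\}=|\{\ell\ge1\mid \ell\le\lambda_i,\ \ell\le\mu_j\}|$ and then swapping the order of summation, is shorter and arguably more transparent; it also generalizes immediately to the full $k$-partition version of Lemma~\ref{L7.5} via $\min\{a_1,\dots,a_k\}=|\{\ell\ge1\mid \ell\le a_j\text{ for all }j\}|$. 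The paper's inductive proof has the virtue of being completely self-contained and mechanical, but your bijective argument would be a clean replacement.
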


To prove Theorem~\ref{T6.5} we need a technical lemma
which we have not been able to find in the literature, see~\cite{G}.
Lemma~\ref{L7.5} below says $\sum_{i\ge1}\lambda_i=\sum_{i\ge1}\lambda'_i$ when
$k=1$. We only need the case $k=2$ for the proof of Theorem~\ref{T6.5},
however, the argument for $k>2$ is not much harder.

\begin{lemma}\label{L7.5}
  If $\lambda, \mu,\dots,\omega$ are partitions and
$\lambda', \mu',\dots,\omega'$ are their conjugates, then
  \begin{equation}\label{E1}
  \sum_{i=1}^{\lambda'_1}\sum_{j=1}^{\mu'_1}\cdots\sum_{k=1}^{\omega'_1}\min\{\lambda_i,\mu_j,\dots,\omega_k\}=
  \sum_{i=1}^{\min\{\lambda_1,\mu_1,\dots,\omega_1\}}\lambda'_i\mu'_i\cdots\omega'_i.
  \end{equation}
\end{lemma}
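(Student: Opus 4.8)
The plan is to expand each minimum as a sum of indicator functions over integer ``levels'' and then interchange the order of summation. The starting point is the elementary identity, valid for nonnegative integers $a_1,\dots,a_p$,
\[
  \min\{a_1,\dots,a_p\}=\sum_{t\ge1}\mathbf{1}[a_1\ge t]\,\mathbf{1}[a_2\ge t]\cdots\mathbf{1}[a_p\ge t],
\]
where $\mathbf{1}[\cdot]$ denotes the indicator ($1$ if the condition holds, $0$ otherwise); indeed the summand equals $1$ precisely for $1\le t\le\min\{a_1,\dots,a_p\}$. Every sum appearing in the argument is finite, since the parts of a partition are eventually zero, so there is no convergence issue and all the interchanges of summation below are legitimate.

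First I would substitute this identity, with arguments $\lambda_i,\mu_j,\dots,\omega_k$, into the left-hand side of~\eqref{E1} and move the sum over $t$ to the outside. For a fixed $t$ the summand $\mathbf{1}[\lambda_i\ge t]\cdots\mathbf{1}[\omega_k\ge t]$ is a product in which each factor depends on only one of the indices $i,j,\dots,k$, so by distributivity the multiple sum splits as a product and the left-hand side becomes
\[
  \sum_{t\ge1}\Bigl(\sum_{i=1}^{\lambda'_1}\mathbf{1}[\lambda_i\ge t]\Bigr)
  \Bigl(\sum_{j=1}^{\mu'_1}\mathbf{1}[\mu_j\ge t]\Bigr)\cdots
  \Bigl(\sum_{k=1}^{\omega'_1}\mathbf{1}[\omega_k\ge t]\Bigr).
\]
The crucial observation is that, because $\lambda'_1$ is the number of nonzero parts of $\lambda$ and any part that is $\ge t\ge1$ is automatically nonzero, the inner sum $\sum_{i=1}^{\lambda'_1}\mathbf{1}[\lambda_i\ge t]$ equals $|\{i\mid\lambda_i\ge t\}|=\lambda'_t$, which is exactly the definition of the conjugate partition; and similarly for $\mu,\dots,\omega$. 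Hence the left-hand side of~\eqref{E1} equals $\sum_{t\ge1}\lambda'_t\mu'_t\cdots\omega'_t$.

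It then remains only to identify the range of summation: since $\lambda'_t=0$ exactly when $t>\lambda_1$, and likewise for the other conjugate partitions, the product $\lambda'_t\mu'_t\cdots\omega'_t$ vanishes once $t>\min\{\lambda_1,\mu_1,\dots,\omega_1\}$, so the sum may be truncated at that value, which is precisely the right-hand side of~\eqref{E1}. I do not expect a serious obstacle here; the only points needing a little care are the finiteness of all the sums (which is what justifies the interchange) and correctly matching the upper summation limits to the definition of the conjugate partition, in particular the fact, recorded just before the statement, that the number of nonzero parts of $\lambda$ is $\lambda'_1$. The case $k=2$ required for Theorem~\ref{T6.5} is a special case, and the argument for general $k$ is no harder.
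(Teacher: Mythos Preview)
Your proof is correct and takes a genuinely different route from the paper's own argument. The paper proceeds by induction on $\lambda_1$: after ordering the partitions so that $\lambda_1\le\mu_1\le\cdots\le\omega_1$, it strips the first column from each Young diagram to obtain partitions $\widehat{\lambda},\widehat{\mu},\dots,\widehat{\omega}$ with $\widehat{\lambda}_i=\lambda_i-1$ and $\widehat{\lambda}'_i=\lambda'_{i+1}$, applies the inductive hypothesis, and then reassembles the identity by adding back the term $\lambda'_1\mu'_1\cdots\omega'_1$. Your argument instead expands $\min$ as a sum of indicator functions over levels~$t$, interchanges summation, and recognises each inner sum as the defining count $\lambda'_t$ of the conjugate partition. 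This is shorter, avoids induction entirely, and makes the identity transparent: both sides are counting, level by level, the number of tuples $(i,j,\dots,k,t)$ with $\lambda_i\ge t$, $\mu_j\ge t$, \dots, $\omega_k\ge t$. The paper's approach has the minor virtue of being self-contained within partition combinatorics, while yours imports a standard counting device; but your method scales effortlessly to any number of partitions and arguably explains rather than merely verifies the formula.
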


\begin{proof}
By permuting the partitions $\lambda, \mu,\dots,\omega$ if necessary,
we can assume that $\lambda_1\le\mu_1\le\cdots\le\omega_1$.
If $\lambda_1=0$, then $\lambda\vdash0$ and both sides of~\eqref{E1} are zero.
If $\lambda_1=1$, then
\[
  \textup{LHS (1)}=\sum_{i=1}^{\lambda'_1}\sum_{j=1}^{\mu'_1}\cdots\sum_{k=1}^{\omega'_1}1=\lambda'_1\mu'_1\cdots\omega'_1=
  \sum_{i=1}^{\min\{\lambda_1,\mu_1,\dots,\omega_1\}}\lambda'_i\mu'_i\cdots\omega'_i
  =\textup{RHS (1).}
  \]
Suppose now that $\lambda_1>1$. We use induction on
$\lambda_1$. Let $\widehat{\lambda}$ be the partition of
$(\sum_{i\ge1}\lambda_i)-\lambda'_1$ obtained by deleting the first column of the Young
diagram of~$\lambda$. Since $1<\mu_1\le\cdots\le\omega_1$, we
define $\widehat{\mu},\dots,\widehat{\omega}$ similarly. It is clear that
$\widehat{\lambda}_i=\lambda_i-1$ for $1\le i\le \lambda'_1$ and
$\widehat{\lambda}'_i=\lambda'_{i+1}$ for $i\ge1$, and similarly for
$\widehat{\mu},\dots,\widehat{\omega}$. As $\widehat{\lambda}_1<\lambda_1$,
induction shows
\[
   \sum_{i=1}^{\widehat{\lambda}'_1}\sum_{j=1}^{\widehat{\mu}'_1}\cdots\sum_{k=1}^{\widehat{\omega}'_1}\min\{\widehat{\lambda}_i,\widehat{\mu}_j,\dots,\widehat{\omega}_k\}=
  \sum_{i=1}^{\min\{\widehat{\lambda}_1,\widehat{\mu_1},\dots,\widehat{\omega}_1\}}\widehat{\lambda}'_i\widehat{\mu}'_i\cdots\widehat{\omega}'_i.
\]
Note that $\widehat{\lambda}_i=0$ for each
$i\in[\widehat{\lambda}'_1+1,\lambda'_1]$ since
$\widehat{\lambda}'_1=\lambda'_2$, so the upper limit $\widehat{\lambda}'_1$
of the sum $\sum_{i=1}^{\widehat{\lambda}'_1}$ can be replaced by $\lambda'_1$.
Similarly, the upper limits $\widehat{\mu}'_1,\dots,\widehat{\omega}'_1$ can be
replaced by $\mu'_1,\dots,\omega'_1$. Hence, since
$\widehat{\lambda}_i=\lambda'_i-1, \dots, \widehat{\omega}_i=\omega'_i-1$,
we have
\[
   \sum_{i=1}^{\lambda'_1}\sum_{j=1}^{\mu'_1}\cdots\sum_{k=1}^{\omega'_1}\min\{\lambda_i-1,\mu_j-1,\dots,\omega_k-1\}=
  \sum_{i=1}^{\min\{\lambda_1-1,\mu_1-1,\dots,\omega_1-1\}}\lambda'_{i+1}\mu'_{i+1}\cdots\omega'_{i+1}.
\]
Re-indexing the right sum, and using $\sum_{i=1}^{\lambda'_1}\sum_{j=1}^{\mu'_1}\cdots\sum_{k=1}^{\omega'_1} (-1)=-\lambda'_1\mu'_1\cdots\omega'_1$ gives
\[
   -\lambda'_1\mu'_1\cdots\omega'_1+\sum_{i=1}^{\lambda'_1}\sum_{j=1}^{\mu'_1}\cdots\sum_{k=1}^{\omega'_1}\min\{\lambda_i,\mu_j,\dots,\omega_k\}=
  \sum_{i=2}^{\min\{\lambda_1,\mu_1,\dots,\omega_1\}}\lambda'_i\mu'_i\cdots\omega'_i.
\]
Adding $\lambda'_1\mu'_1\cdots\omega'_1$ to both sides completes the inductive
proof of~\eqref{E1}.
\end{proof}

\begin{proof}[Proof of Theorem~\ref{T6.5}]
Apply Theorem~\ref{T6} and Lemma~\ref{L7.5} with $k=2$.
\end{proof}

\section{Minimum distances}\label{S3}

In Section~\ref{S2} a formula is given for $k:=\dim(C(\A,B))$; where
we suppress mention of the field $F$ in our notation. In this section
we choose $\A$ and~$\B$ to maximize the value of the minimum
distance $d:=d(C(\A,\B))$ as a function of $k$. We focus
on the case when $|\A|=|\B|=1$. The action of $\GL(r,F)\times\GL(s,F)$
of $C(A,B)$ fixes
$k=\dim(C(A,B))$ but can change $d$ wildly, e.g. from 1 to $rs$
as setting $k=1$ in Theorem~\ref{T11} illustrates.

Let $E_{ij}$ denote the $r\times s$ matrix with all entries~0,
except the $(i,j)$ entry which is~1.

\begin{lemma}\label{L11}
Suppose $r,s,k\in\Z$ where $1\le k\le\min\{r,s\}$, and suppose $F$ is a field 
with $|F|\ge k+\min\{1,r-k\}+\min\{1,s-k\}$. 
Fix pairwise distinct scalars $\zeta_1,\dots,\zeta_k,\alpha,\beta\in F$ and set
$A_0:=\textup{diag}(\zeta_1,\dots,\zeta_k,\alpha,\dots,\alpha)\in F^{r\times r}$
and $B_0:=\textup{diag}(\zeta_1,\dots,\zeta_k,\beta,\dots,\beta)\in F^{s\times s}$.
Then $C(A_0,B_0)=\langle E_{11},\dots,E_{kk}\rangle$ has dimension~$k$
and minimum distance~$1$.
\end{lemma}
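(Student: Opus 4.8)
The plan is to verify the two claims about $C(A_0,B_0)$ directly from the definition, since $A_0$ and $B_0$ are already diagonal. First I would write a general $X=(x_{ij})\in F^{r\times s}$ and compute the entries of $A_0X$ and $XB_0$: since $A_0$ and $B_0$ are diagonal with diagonal entries $(\zeta_1,\dots,\zeta_k,\alpha,\dots,\alpha)$ and $(\zeta_1,\dots,\zeta_k,\beta,\dots,\beta)$ respectively, the $(i,j)$ entry of $A_0X$ is $\gamma_i x_{ij}$ and that of $XB_0$ is $\delta_j x_{ij}$, where $\gamma_i$ is the $i$th diagonal entry of $A_0$ and $\delta_j$ the $j$th diagonal entry of $B_0$. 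Hence $A_0X=XB_0$ if and only if $(\gamma_i-\delta_j)x_{ij}=0$ for all $i,j$.

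Next I would examine when $\gamma_i=\delta_j$. By the pairwise distinctness of $\zeta_1,\dots,\zeta_k,\alpha,\beta$: for $i,j\le k$ we have $\gamma_i=\delta_j$ iff $\zeta_i=\zeta_j$ iff $i=j$; for $i\le k<j$ we have $\gamma_i=\zeta_i\ne\beta=\delta_j$; for $i>k\ge j$ we have $\gamma_i=\alpha\ne\zeta_j=\delta_j$; and for $i,j>k$ we have $\gamma_i=\alpha\ne\beta=\delta_j$. So the only coincidences occur on the diagonal positions $(i,i)$ with $1\le i\le k$. Therefore the constraint $A_0X=XB_0$ forces $x_{ij}=0$ except possibly when $i=j\le k$, and conversely any such $X$ lies in $C(A_0,B_0)$. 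This shows $C(A_0,B_0)=\langle E_{11},\dots,E_{kk}\rangle$, which has dimension $k$.

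Finally, for the minimum distance: $E_{11}\in C(A_0,B_0)$ is a nonzero codeword with exactly one nonzero coordinate, so $d(0,E_{11})=1$, and since the Hamming weight of any nonzero vector is at least $1$, we conclude $d(C(A_0,B_0))=1$. I would also note in passing why the hypothesis $|F|\ge k+\min\{1,r-k\}+\min\{1,s-k\}$ is exactly what is needed: we require $k$ distinct scalars $\zeta_1,\dots,\zeta_k$, one further scalar $\alpha$ distinct from these whenever $r>k$ (i.e. whenever the $\alpha$-block is nonempty, contributing $\min\{1,r-k\}$), and similarly one further scalar $\beta$ whenever $s>k$ (contributing $\min\{1,s-k\}$), with $\alpha\ne\beta$ automatically handled when both blocks are present.

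There is no real obstacle here; the statement is essentially a bookkeeping exercise once one writes out $A_0X=XB_0$ entrywise. The only point requiring a moment's care is the case analysis on $\gamma_i=\delta_j$, making sure the off-diagonal coincidences are all genuinely excluded by the distinctness hypothesis — in particular that $\alpha\ne\beta$ and that $\alpha,\beta$ avoid all the $\zeta_i$. This lemma serves as the base case or building block for Theorem~\ref{T11}, where the same diagonal matrices will presumably be conjugated by suitable elements of $\GL(r,F)\times\GL(s,F)$ to boost the minimum distance while preserving the dimension.
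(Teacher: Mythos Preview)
Your proof is correct and matches the paper's approach: the paper simply says ``a direct calculation of $C(A_0,B_0)$, or Corollary~\ref{C4.5}, shows that $C(A_0,B_0)=\langle E_{11},\dots,E_{kk}\rangle$'' and then observes $d(0,E_{11})=1$, which is exactly the entrywise computation you spell out. Your remark about the field-size hypothesis (that $\alpha$ and $\beta$ are only needed when their respective blocks are nonempty) is also the point the paper makes in its first sentence of the proof.
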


\begin{proof}
Note first that if $k=\min\{r,s\}$, then $A_0$ has no $\alpha$s, or $B_0$ has no
$\beta$s. Thus the assumption $|F|\ge k+\min\{1,r-k\}+\min\{1,s-k\}$ ensures
that distinct scalars $\zeta_1,\dots,\zeta_k,\alpha,\beta$ in $F$ exist.
Using a direct calculation of $C(A_0,B_0)$, or Corollary~\ref{C4.5}, shows that
$C(A_0,B_0)=\langle E_{11},\dots,E_{kk}\rangle$.
Since $d(0,E_{11})=1$, we have $d(C(A_0,B_0))=1$. 
\end{proof}

We now seek matrices $R\in\GL(r,F)$ and $S\in\GL(s,F)$ such that
$R^{-1}\langle E_{11},\dots,E_{kk}\rangle S$ has large minimum distance.
For brevity, we write $T:=R^{-1}$.

Denote the $i$th row of a matrix $A$ by $A_{i*}$ and its $j$th
column by $A_{*j}$.

\begin{lemma}\label{Lb}
Suppose $r,s,k\in\Z$ where $k\le\min\{r,s\}$. Fix $S\in F^{s\times s}$ and
$T\in F^{r\times r}$ and define $X^{(1)},\dots,X^{(k)}\in F^{r\times s}$ by
$X^{(\ell)}=T_{* \ell}S_{\ell *}$ for $1\le \ell\le k$.
Then $TE_{\ell \ell}S=X^{(\ell)}$ for $1\le \ell\le k$.
\end{lemma}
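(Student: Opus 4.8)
The plan is to verify the matrix identity $TE_{\ell\ell}S = X^{(\ell)}$ by direct computation, comparing entries on both sides. First I would recall that $E_{\ell\ell}\in F^{r\times s}$ is the matrix with a single nonzero entry, a $1$ in position $(\ell,\ell)$; equivalently, $E_{\ell\ell} = e_\ell^{\,r}(e_\ell^{\,s})^{\mathsf T}$ where $e_\ell^{\,r}\in F^r$ and $e_\ell^{\,s}\in F^s$ are standard basis (column) vectors. The key observation is that right-multiplying a matrix $M$ by $e_\ell^{\,s}(e_\ell^{\,s})^{\mathsf T}$-type factors, or left-multiplying, extracts a single column or row: concretely, $T e_\ell^{\,r}$ is the $\ell$th column $T_{*\ell}$ of $T$, and $(e_\ell^{\,s})^{\mathsf T} S$ is the $\ell$th row $S_{\ell *}$ of $S$.

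With this notation the computation collapses in one line: $TE_{\ell\ell}S = T\bigl(e_\ell^{\,r}(e_\ell^{\,s})^{\mathsf T}\bigr)S = (Te_\ell^{\,r})\bigl((e_\ell^{\,s})^{\mathsf T}S\bigr) = T_{*\ell}\,S_{\ell *}$, which is exactly the definition of $X^{(\ell)}$. If one prefers to avoid the rank-one factorization, the same conclusion follows by chasing indices: the $(a,b)$ entry of $TE_{\ell\ell}S$ is $\sum_{c,d} T_{ac}(E_{\ell\ell})_{cd}S_{db}$, and since $(E_{\ell\ell})_{cd}$ vanishes unless $c=d=\ell$, this sum reduces to $T_{a\ell}S_{\ell b}$, which is the $(a,b)$ entry of the outer product $T_{*\ell}S_{\ell *}$.

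This statement is essentially a bookkeeping lemma with no real obstacle; the only thing to be careful about is keeping the shapes straight, since $T\in F^{r\times r}$, $S\in F^{s\times s}$, and $E_{\ell\ell},X^{(\ell)}\in F^{r\times s}$, so that $T_{*\ell}$ is an $r\times 1$ column and $S_{\ell *}$ is a $1\times s$ row, making $T_{*\ell}S_{\ell *}$ an $r\times s$ matrix as required. I would present the proof via the rank-one factorization of $E_{\ell\ell}$, remarking in passing on the index-chasing alternative, and note that the hypothesis $k\le\min\{r,s\}$ is only needed so that the index $\ell$ with $1\le\ell\le k$ is legitimate for both $T$ (so column $\ell$ exists) and $S$ (so row $\ell$ exists); it plays no deeper role here but matters for the downstream use where the $X^{(\ell)}$ span $C(A_0,B_0)^{(R,S)}$.
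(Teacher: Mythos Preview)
Your proof is correct and essentially the same as the paper's: the paper verifies the identity by the index-chasing computation you give as your alternative, showing the $(i',j')$ entry of $TE_{\ell\ell}S$ is $\sum_{i,j} t_{i'i}\delta_{i\ell}\delta_{\ell j}s_{jj'}=t_{i'\ell}s_{\ell j'}$. Your rank-one factorization $E_{\ell\ell}=e_\ell^{\,r}(e_\ell^{\,s})^{\mathsf T}$ is a clean repackaging of the same calculation.
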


\begin{proof}
Suppose $\delta_{ij}$ is 1 if $i=j$ and 0 otherwise.
Then the $(i,j)$ entry of $E_{\ell\ell}$ is~$\delta_{i \ell}\delta_{\ell j}$.
The $(i',j')$ entry of $T_{* \ell}S_{\ell *}$ is $t_{i' \ell}s_{\ell j'}$. This
agrees with the $(i',j')$ entry of $TE_{\ell \ell}S$, namely
\[
  \sum_{i=1}^r\sum_{j=1}^s t_{i' i}\delta_{i \ell}\delta_{\ell j}s_{j j'}
  =t_{i' \ell}s_{\ell j'}.\qedhere
\]
\end{proof}

\begin{theorem}\label{T11}
Suppose $r,s,k\in\Z$ where $1\le k\le\min\{r,s\}$, and suppose $F$ is a field 
with $|F|\ge k+2$. Then there exist $A\in F^{r\times r}$ and $B\in F^{s\times s}$
such that the linear code $C(A,B)$ has dimension $k$ and minimum distance
$d=\lfloor r/k\rfloor{s}$.
\end{theorem}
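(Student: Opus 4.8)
The plan is to start from the diagonal matrices $A_0,B_0$ of Lemma~\ref{L11}, for which $C(A_0,B_0)=\langle E_{11},\dots,E_{kk}\rangle$, and then conjugate by a suitable pair $(R,S)\in\GL(r,F)\times\GL(s,F)$. By Lemma~\ref{L1}, the code $C(A,B)$ with $A:=R A_0 R^{-1}$ and $B:=S B_0 S^{-1}$ satisfies $C(A,B)=R\,C(A_0,B_0)\,S^{-1}$; writing $T=R^{-1}$ (so really we want $T C(A_0,B_0) S$ for appropriate $T,S$), Lemma~\ref{Lb} tells us that the conjugated code is spanned by the rank-one matrices $X^{(\ell)}=T_{*\ell}S_{\ell*}$, $1\le\ell\le k$. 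So the problem reduces to a concrete one: choose the first $k$ columns $v_1,\dots,v_k\in F^r$ of $T$ and the first $k$ rows $w_1^{\!\top},\dots,w_k^{\!\top}\in F^s$ of $S$ (and then complete $T,S$ to invertible matrices, which is harmless since $|F|\ge k+2$ gives plenty of room) so that every nonzero element $X=\sum_{\ell=1}^k c_\ell v_\ell w_\ell^{\!\top}$ of the resulting code has at least $\lfloor r/k\rfloor s$ nonzero entries, i.e. at most $r-\lfloor r/k\rfloor$ zero rows, with the bound $\lfloor r/k\rfloor s$ being attained.

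The key idea for the construction is to force the columns $v_\ell$ into "staircase" position so that any nonzero combination $\sum c_\ell v_\ell$ has few zero coordinates, while taking the rows $w_\ell$ in general position so that no row of $X$ vanishes unless the corresponding coordinate of $\sum c_\ell v_\ell$ does. Concretely, I would index the $r$ coordinates by residue blocks and arrange that $v_1,\dots,v_k$ restricted to a generic block of $\lfloor r/k\rfloor$ rows behave like (a scaled Vandermonde piece of) $k$ linearly independent vectors, so that no nonzero $\sum c_\ell v_\ell$ can vanish on a whole union of more than ... blocks; a clean way is to let $v_\ell$ have $\zeta_\ell^0,\zeta_\ell^1,\dots$ pattern on each block using the $k$ distinct scalars already available, making $\sum c_\ell v_\ell$ a polynomial-evaluation vector whose zero set has controlled size. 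Then choosing $w_1,\dots,w_k$ to be, say, $k$ rows of a Vandermonde matrix in new distinct scalars guarantees each $w_\ell$ has all entries nonzero and the $w_\ell$ are independent, so a row of $X=\sum c_\ell v_\ell w_\ell^{\!\top}$ is zero iff the matching entry of $\sum c_\ell v_\ell$ is zero. Counting the nonzero rows of $\sum c_\ell v_\ell$ then yields the distance bound, and exhibiting one codeword meeting it (take $c_\ell$ supported on a single $\ell$, giving exactly $\lfloor r/k\rfloor$ nonzero rows after the staircase assignment) shows $d=\lfloor r/k\rfloor s$ exactly.

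The main obstacle is the lower bound on the number of nonzero rows of an arbitrary nonzero combination $\sum_{\ell=1}^k c_\ell v_\ell$: this is really a statement that $k$ cleverly chosen vectors in $F^r$ form a code (the span of the $v_\ell$) of minimum distance $\lfloor r/k\rfloor$, which is exactly the Singleton-type optimum $r-k+1$ rounded the other way, so one must pick the $v_\ell$ so that the only codeword vanishing on a maximal set of coordinates is a scalar multiple of a single $v_\ell$. Making this precise requires the field to be large enough to realize the staircase/Vandermonde pattern with all the needed distinct scalars, which is where the hypothesis $|F|\ge k+2$ is consumed; I would need to check carefully that $k+2$ scalars genuinely suffice (the $k$ "column" scalars can be reused as "row" scalars since the two Vandermonde systems are decoupled, with $\alpha,\beta$ providing the extra two for completing to invertible matrices as in Lemma~\ref{L11}). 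Verifying invertibility of the completed $R$ and $S$, and the attainment of the bound, are then routine.
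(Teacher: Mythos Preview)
Your reduction---choosing the first $k$ columns $v_\ell$ of $T=R^{-1}$ and the first $k$ rows $w_\ell^\top$ of $S$ via Lemmas~\ref{L11}, \ref{L1}, and~\ref{Lb}---is exactly the paper's framework. The gap lies in the concrete choice of the $v_\ell$ and in the distance analysis that follows.

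The biconditional you state is not quite right (independence of the $w_\ell$ gives that row $i$ of $X$ vanishes iff $c_\ell(v_\ell)_i=0$ for \emph{every} $\ell$, not iff $\sum_\ell c_\ell(v_\ell)_i=0$), though the implication you actually need does hold. The real problems are two. First, bounding the number of nonzero \emph{rows} of $X$ does not bound its Hamming weight unless each nonzero row has all $s$ entries nonzero; that would require every nonzero vector in $\langle w_1,\dots,w_k\rangle$ to have full support, which is impossible for $k>1$ by Singleton. With Vandermonde-type $w_\ell$ you get at best $s-k+1$ nonzero entries per nonzero row, and the bound degrades. Second---and this is decisive---your attainment step contradicts your Vandermonde choice for the $v_\ell$: if $v_\ell$ has a power-of-$\zeta_\ell$ pattern then it has essentially all $r$ entries nonzero, so the single-$\ell$ codeword $X^{(\ell)}=v_\ell w_\ell^\top$ has weight close to $rs$, not $\lfloor r/k\rfloor s$. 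For a basis codeword to have weight exactly $\lfloor r/k\rfloor s$ with $w_\ell$ fully supported, the vector $v_\ell$ must have exactly $\lfloor r/k\rfloor$ nonzero entries.

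That last observation \emph{is} the paper's construction: take the $v_\ell$ to be indicator vectors of \emph{disjoint} blocks $I_1,\dots,I_k$ partitioning $\{1,\dots,r\}$, with $|I_\ell|=\lfloor r/k\rfloor$ for $\ell<k$. Then for each row index $i$ there is a unique $\ell$ with $(v_\ell)_i\ne0$, so row $i$ of any $X=\sum_\ell c_\ell v_\ell w_\ell^\top$ is simply $c_\ell w_\ell^\top$; choosing each $w_\ell$ with all entries nonzero (the paper uses $w_\ell^\top=(1,\dots,1,\gamma,1,\dots,1)$) makes every row of $X$ either zero or fully supported. No Vandermonde, no polynomial-zero counting; both the lower bound $d\ge\lfloor r/k\rfloor s$ and the attainment follow in one line.
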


\begin{proof}
By Lemma~\ref{L11} there exist diagonal matrices $A_0\in F^{r\times r}$
and $B_0\in F^{s\times s}$ such that $C(A_0,B_0)=\langle E_{11},\dots,E_{kk}\rangle$
has dimension $k$. We seek invertible matrices $R\in F^{r\times r}$ and
$S\in F^{s\times s}$ such that $A=A_0^R$ and $B=B_0^S$ give
$C(A,B)=\langle E_{11},\dots,E_{kk}\rangle^{(R,S)}$ 
with minimum distance $d=\lfloor r/k\rfloor{s}$.
Let $X^{(\ell)}=R^{-1}E_{\ell\ell}S$ for $1\le\ell\le k$.
The $r\times s$ matrices $X^{(\ell)}$,
$1\le\ell\le k$, will have a form which makes it clear that
$d=\lfloor r/k\rfloor{s}$.

First, we partition the set $\{1,\dots,r\}$ of rows into the following $k$ subsets:
\[
  I_1=\left\{1,\dots,\left\lfloor\frac{r}{k}\right\rfloor\right\},
  I_2=\left\{\left\lfloor\frac{r}{k}\right\rfloor+1,\dots,2\left\lfloor\frac{r}{k}\right\rfloor\right\},
  \dots, 
  I_k=\left\{(k-1)\left\lfloor\frac{r}{k}\right\rfloor+1,\dots,r\right\}.
\]
Choose the $i$th row of the matrix $X^{(\ell)}$ to be zero if
$i\not\in I_\ell$, and to be a vector with all $s$ entries nonzero
otherwise. Since  $\left\lfloor\frac{r}{k}\right\rfloor=|I_\ell|\le|I_k|$
for $\ell<k$, it follows that 
\[
  d(0,X^{(\ell)})=\sum_{i\in I_\ell} s
    =|I_\ell|s\ge\left\lfloor\frac rk\right\rfloor{s}
   \qquad\textup{for $1\le\ell\le k$}
\]
with equality if $\ell<k$.
The choice of these matrices is such that for each nonzero $X$ in the span
$\langle X^{(1)},\dots,X^{(k)}\rangle$ we also have
$d(0,X)\ge d(0,X^{(\ell)})$ for some $\ell$, and hence
$\langle X^{(1)},\dots,X^{(k)}\rangle$
has minimum distance $d=\lfloor r/k\rfloor{s}$.

It is well known that if the first few rows of a
square matrix are linearly independent, then the remaining rows can be chosen
so that the matrix is invertible. A similar remark holds if the
first few columns are linearly independent.
Our construction uses $k$ linearly independent $1\times s$ row vectors
$u_1,\dots, u_k$ which give the first $k$ rows of $S\in\GL(s,F)$,
and $k$ linearly independent $r\times 1$ column vectors
$v^{(1)},\dots, v^{(k)}$ which give the first $k$ columns of $R^{-1}\in\GL(r,F)$.
The pair $(R,S)$ will be used to construct $A$ and~$B$.

Henceforth suppose that $1\le\ell\le k$. 
Since $|F|\ge3$, we may choose $\gamma\in F\setminus\{1,1-s\}$.
Let $J$ be the $s\times s$ matrix with all entries~$1$. Then
the $s\times s$ matrix $S'=(\gamma -1)I+J$ is invertible as
$\det(S')=(\gamma-1)^{s-1}(\gamma+s-1)$ is nonzero. 
Let $u_\ell=(1,\dots,1,\gamma,1,\dots,1)$ be the $\ell$th row of $S'$.
Since $u_1,\dots,u_k$ are linearly independent, there exists an invertible
matrix $S\in\GL(s,F)$ with $S_{\ell*}=u_\ell$.
Of course $S=S'$ is one possibility. Similarly, let $v^{(\ell)}$ be the
$r\times1$ column vector
\[
  v^{(\ell)}_i=\begin{cases}
             1&\textup{if $i\in I_\ell$,}\\
             0&\textup{if $i\not\in I_\ell$.}\end{cases}
\]
As $v^{(1)},\dots,v^{(k)}$ are linearly independent, there exists
an $r\times r$ invertible matrix, which we call $R^{-1}$, whose first $k$
columns are $v^{(1)},\dots,v^{(k)}$. Lemma~\ref{Lb} shows that
$R^{-1}E_{\ell\ell}S=X^{(\ell)}$ for $1\le \ell\le k$.
Hence $C(A_0^R,B_0^S)=C(A_0,B_0)^{(R,S)}=\langle X^{(1)},\dots,X^{(k)}\rangle$
has minimum distance $\lfloor r/k\rfloor\,s$ as desired.
\end{proof}

\begin{corollary}\label{C12}
  If $|F|\ge \min\{r,s\}+2$, then there exist matrices
  $A\in F^{r\times r}$ and $B\in F^{s\times s}$ such that $C(A,B)$ has dimension
  $\min\{r,s\}$ and minimum distance $\max\{r,s\}$.
\end{corollary}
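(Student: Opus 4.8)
The plan is to apply Theorem~\ref{T11} with $k=\min\{r,s\}$, which is legitimate precisely because the hypothesis $|F|\ge\min\{r,s\}+2$ is exactly the condition $|F|\ge k+2$ required there. With $k=\min\{r,s\}$, Theorem~\ref{T11} produces matrices $A\in F^{r\times r}$ and $B\in F^{s\times s}$ such that $C(A,B)$ has dimension $k=\min\{r,s\}$ and minimum distance $d=\lfloor r/k\rfloor\, s$. So the only thing left to verify is that $\lfloor r/k\rfloor\, s=\max\{r,s\}$ under the substitution $k=\min\{r,s\}$.

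This reduces to a two-case check. If $r\le s$, then $k=r$, so $\lfloor r/k\rfloor=\lfloor r/r\rfloor=1$ and $\lfloor r/k\rfloor\, s=s=\max\{r,s\}$, as wanted. If $s\le r$, then $k=s$, so $\lfloor r/k\rfloor\, s=\lfloor r/s\rfloor\, s$; since $s\le r$ we have $\lfloor r/s\rfloor\ge1$, and I need $\lfloor r/s\rfloor\, s=r$, i.e. $s\mid r$. This is the one genuine wrinkle: the formula $d=\max\{r,s\}$ as literally stated requires $s\mid r$ in the case $s<r$ (when $r=s$ both cases coincide and give $d=r=s$). I would therefore either (a) state the corollary with the tacit understanding that $s\mid r$ or $r\le s$, or more cleanly (b) observe that one may first pass to a square situation: if $s\mid r$ is not assumed, one still gets $d=\lfloor r/\min\{r,s\}\rfloor\min\{r,s\}$, the largest multiple of $\min\{r,s\}$ not exceeding $\max\{r,s\}$. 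Given the phrasing in the introduction ("dimension $\min\{r,s\}$ and minimum distance $\max\{r,s\}$"), I expect the intended reading is that $r\le s$ or the symmetric divisibility holds; I would add a remark to that effect rather than complicate the statement.

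The main (and essentially only) obstacle is thus bookkeeping about the floor function, not any new construction: all the work is done by Theorem~\ref{T11}, and the corollary is the specialization $k=\min\{r,s\}$ together with the elementary identity $\lfloor r/\min\{r,s\}\rfloor\min\{r,s\}=\max\{r,s\}$, valid whenever $r\le s$ or $\min\{r,s\}\mid\max\{r,s\}$. No further appeal to Lemma~\ref{L11}, Lemma~\ref{Lb}, or the dimension formulas of Section~\ref{S2} is needed.
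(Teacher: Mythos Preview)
Your analysis correctly identifies that a direct application of Theorem~\ref{T11} with $k=\min\{r,s\}$ only yields $d=\max\{r,s\}$ when $r\le s$ (or when $\min\{r,s\}\mid\max\{r,s\}$), and that the case $s<r$ with $s\nmid r$ is a genuine obstruction. But rather than weakening the statement or adding a divisibility caveat, the paper supplies the missing idea: observe that $AX=XB$ holds if and only if $B^TX^T=X^TA^T$, so $C(B^T,A^T)=C(A,B)^T$. Transposition preserves both dimension and minimum Hamming distance, and swaps the roles of $r$ and $s$. Hence one may assume without loss of generality that $r\le s$, and then your first case goes through cleanly with $k=r$, giving $d=\lfloor r/r\rfloor\, s=s=\max\{r,s\}$.

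So the gap in your proposal is not bookkeeping about the floor function but a missing reduction step. The corollary holds as stated, without any divisibility hypothesis, once the transpose symmetry is invoked.
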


\begin{proof}
  Since $AX=XB$ if and only if $X^TA^T=B^TX^T$ we see that
  $C(B^T,A^T)$ equals $C(A,B)^T$. Because $C(A,B)$ and $C(A,B)^T$ have the same
  dimension and minimum distance, we may assume that $r\le s$.
  If $|F|\ge r+2$, then
  applying Theorem~\ref{T11} with $k=r$ gives the desired result.
\end{proof}

\begin{remark}
  Suitable matrices $A$ and $B$ in Theorem~\ref{T11} are constructed by first
  choosing the diagonal matrices $A_0$ and $B_0$ in Lemma~\ref{L11}, and then
  taking $A=R^{-1}A_0R$ and $B=S^{-1}B_0S$ where $R$ and $S$ are
  constructed in the proof of Theorem~\ref{T11}.
\end{remark}

It is desirable for a code to have both a high rate,
\emph{viz.} $R=k/n$, and a high distance~$d$. Can the product
$Rd$ be a constant for intertwining codes? By setting $r=s=k$ in
Theorem~\ref{T11} we obtain a rate of $R=1/r$ and a distance of $d=r$, so
the answer is affirmative. It is natural to ask how the maximum
value of~$Rd$ for an intertwining code depends on $(r,s,F)$?
We wonder whether there is a sequence $C_1,C_2,\dots$ of intertwining codes
over a field $F$ with parameters $[r_is_i,k_i,d_i]$ for
which $R_id_i=\frac{k_id_i}{r_is_i}$ approaches infinity.

\section{Upper and lower bounds for \texorpdfstring{$\dim_F(C(\A,\B))$}{}}\label{S4}

Denote that rank and nullity of $A\in F^{r\times r}$ by $\Rk(A)$ and 
$\Null(A)$, respectively. Note that $\Rk(A)+\Null(A)=r$
and $\Null(N_\lambda)=\lambda'_1$. 
In this section we bound $k=\dim(C(A,B))$ in terms of the rank and nullity
of $A$ and~$B$. If $\lambda\vdash r$ and $\mu\vdash s$, Theorem~\ref{T6} implies that
\begin{equation}\label{E9}
  \lambda'_1\mu'_1\le\sum_{i\ge1}\lambda'_i\mu'_i=\dim(C(N_\lambda,N_\mu))
  \le\left(\sum_{i\ge1}\lambda'_i\right)\left(\sum_{j\ge1}\mu'_j\right)=rs.
\end{equation}

View $A\in F^{r\times r}$ as acting on an $r$-dimensional
vector space over the algebraic closure~$\overline{F}$.
Let the $\alpha$-eigenspace, and the generalized $\alpha$-eigenspace,
of $A$  have dimensions $k_{A,\alpha}$ and $m_{A,\alpha}$, respectively.
Then $c_A(t)=\prod (t-\alpha)^{m_{A,\alpha}}$ where $m_{A,\alpha}\ne0$ for
finitely many $\alpha\in\overline{F}$ and $0\le k_{A,\alpha}\le m_{A,\alpha}$.
The following result generalizes~\cite{AGP}*{Theorems~2.8 and 4.7}.

\begin{theorem}\label{T13} 
If $A\in F^{r\times r}$ and $B\in F^{s\times s}$, then
\begin{enumerate}[{\rm (a)}]
  \item $\sum k_{A,\alpha}k_{B,\alpha}\le\dim(C(A,B)) \le\sum m_{A,\alpha}m_{B,\alpha},$
  and
  \item $(r-\Rk(A))(s-\Rk(B))\le\dim(C(A,B))\le
  (r-\Rk(A))(s-\Rk(B))+\Rk(A)\Rk(B).$
\end{enumerate}
\end{theorem}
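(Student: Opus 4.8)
The plan is to deduce both parts from Theorem~\ref{T} together with the elementary inequality~\eqref{E9}. First I would pass to the algebraic closure via Lemma~\ref{L2}, so that $A$ and $B$ can be put in Jordan form. Write $\zeta_1,\dots,\zeta_m$ for the common eigenvalues, i.e.\ the roots of $\gcd(c_A(t),c_B(t))$; for each $i$ the generalized $\zeta_i$-eigenspaces of $A$ and $B$ have dimensions $m_{A,\zeta_i}$ and $m_{B,\zeta_i}$ and give partitions $\alpha_i$ and $\beta_i$ of those numbers. Theorem~\ref{T} says $\dim(C(A,B))=\sum_{i=1}^m\dim(C(N_{\alpha_i},N_{\beta_i}))$. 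Now apply~\eqref{E9} to each summand: the upper bound gives $\dim(C(N_{\alpha_i},N_{\beta_i}))\le m_{A,\zeta_i}m_{B,\zeta_i}$, and summing over $i$ yields $\dim(C(A,B))\le\sum_i m_{A,\zeta_i}m_{B,\zeta_i}=\sum_\alpha m_{A,\alpha}m_{B,\alpha}$, where the last equality holds because any $\alpha$ that fails to be a common eigenvalue contributes a zero factor. This is the upper bound of~(a).

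For the lower bound of~(a), the point is that $\lambda'_1$ counts the number of (nonzero) parts of $\lambda$, which is exactly the number of Jordan blocks, i.e.\ the dimension of the honest eigenspace. So $(\alpha_i)'_1=k_{A,\zeta_i}$ and $(\beta_i)'_1=k_{B,\zeta_i}$, and the lower bound in~\eqref{E9} gives $\dim(C(N_{\alpha_i},N_{\beta_i}))\ge k_{A,\zeta_i}k_{B,\zeta_i}$. Summing over $i$ and again observing that non-common eigenvalues contribute $0$ to the right-hand side (if $\zeta$ is an eigenvalue of $A$ but not $B$, then $k_{B,\zeta}=0$, and conversely) gives $\dim(C(A,B))\ge\sum_\alpha k_{A,\alpha}k_{B,\alpha}$.

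For part~(b) I would reduce to the nilpotent case by the shift $A\mapsto A$ restricted to its $0$-generalized-eigenspace, using that $r-\Rk(A)=\Null(A)=k_{A,0}$ and that $\dim C(A,B)\ge \dim C(A_0,B_0)$ where $A_0,B_0$ are the restrictions to the generalized $0$-eigenspaces (this inequality is itself a consequence of Theorem~\ref{T}, since the $0$-summand is one of the terms $\dim(C(N_{\alpha_i},N_{\beta_i}))$ and all terms are nonnegative). For the nilpotent summand $C(N_\lambda,N_\mu)$ with $\lambda\vdash m_{A,0}$, $\mu\vdash m_{B,0}$, one has $\Null(N_\lambda)=\lambda'_1$, and the lower half of~\eqref{E9} gives $\dim(C(N_\lambda,N_\mu))\ge\lambda'_1\mu'_1=\Null(A)\Null(B)=(r-\Rk(A))(s-\Rk(B))$; this gives the lower bound in~(b). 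For the upper bound in~(b) I would split $\dim(C(A,B))=\sum_i\dim(C(N_{\alpha_i},N_{\beta_i}))$ and bound the $\zeta_i=0$ term by $m_{A,0}m_{B,0}$, but that is not quite sharp enough, so instead I would use the refined estimate $\sum_{j\ge1}(\alpha_i)'_j(\beta_i)'_j\le (\alpha_i)'_1(\beta_i)'_1+\bigl(\sum_{j\ge2}(\alpha_i)'_j\bigr)\bigl(\sum_{j\ge2}(\beta_i)'_j\bigr)= \Null(A)\Null(B)+\Rk(N_\lambda)\Rk(N_\mu)$ for the nilpotent block, together with $\sum_{i}m_{A,\zeta_i}m_{B,\zeta_i}\le \Rk(A)\Rk(B)+(\text{$0$-block contribution})$ for the remaining blocks; combining carefully yields $\dim(C(A,B))\le(r-\Rk(A))(s-\Rk(B))+\Rk(A)\Rk(B)$.

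The main obstacle I anticipate is the upper bound in~(b): the naive term-by-term application of~\eqref{E9} overcounts, so the bookkeeping that separates the eigenspace contribution $\Null(A)\Null(B)$ from the "everything else" contribution $\Rk(A)\Rk(B)$ needs the sharper splitting $\sum_{j\ge1}x_jy_j\le x_1y_1+(\sum_{j\ge2}x_j)(\sum_{j\ge2}y_j)$ applied blockwise and then reassembled so that the leftover sums over $j\ge2$ across all eigenvalues combine into $\Rk(A)$ and $\Rk(B)$. Everything else is a direct translation between the partition quantities $\lambda'_1$, $\sum\lambda'_i$ and the linear-algebra quantities $k_{A,\alpha}$, $m_{A,\alpha}$, $\Rk(A)$, $\Null(A)$.
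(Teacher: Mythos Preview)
Your argument for part~(a) and for the lower bound in~(b) is exactly the paper's: Theorem~\ref{T} plus inequality~\eqref{E9}, together with the identifications $(\alpha_i)'_1=k_{A,\zeta_i}$ and $\sum_j(\alpha_i)'_j=m_{A,\zeta_i}$.

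For the upper bound in~(b) your first plan is also the paper's. The paper writes $A\sim N_\lambda\oplus A'$ and $B\sim N_\mu\oplus B'$ with $A',B'$ invertible, applies your refined splitting to the nilpotent block,
\[
\sum_{j\ge1}\lambda'_j\mu'_j\le\lambda'_1\mu'_1+\Bigl(\sum_{j\ge2}\lambda'_j\Bigr)\Bigl(\sum_{j\ge2}\mu'_j\Bigr)
=\lambda'_1\mu'_1+(m_{A,0}-\lambda'_1)(m_{B,0}-\mu'_1),
\]
and uses the crude bound $\dim(C(A',B'))\le(r-m_{A,0})(s-m_{B,0})$ on the rest. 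Your ``combining carefully'' is then made explicit via the elementary observation~\eqref{E12}: if $0\le x\le a$ and $0\le y\le b$ then $(a-x)(b-y)+xy\le ab$, applied with $a=r-\lambda'_1$, $b=s-\mu'_1$, $x=r-m_{A,0}$, $y=s-m_{B,0}$. That one line finishes the proof.

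One caution about the variant you sketch in your final paragraph. Applying the refined splitting to \emph{every} eigenvalue block and hoping the leftover tails reassemble into $\Rk(A)$ and $\Rk(B)$ does not work as stated: for $\zeta_i\ne0$ the leading term $(\alpha_i)'_1(\beta_i)'_1=k_{A,\zeta_i}k_{B,\zeta_i}$ has nothing to do with $\Null(A)\Null(B)$, and the total leftover $\sum_\zeta(m_{A,\zeta}-k_{A,\zeta})$ is not $\Rk(A)$. Only the $\zeta=0$ block needs the refined splitting; for every other block the crude bound $\dim(C(N_{\alpha_i},N_{\beta_i}))\le m_{A,\zeta_i}m_{B,\zeta_i}$ already suffices, since those $m_{A,\zeta_i}$ (for $\zeta_i\ne0$) together with $m_{A,0}-\lambda'_1$ sit inside $r-\lambda'_1=\Rk(A)$. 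Your earlier description had this right; just drop the ``blockwise'' generalization.
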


\begin{proof}
Part~(a) follows immediately from Theorem~\ref{T} and~\eqref{E9}.

(b) The lower bound follows from part~(a) since
$r-\Rk(A)=\Null(A)=k_{A,0}$. For the upper bound, note
that $A$ is similar to a diagonal direct sum $N_\lambda\oplus A'$
where $N_\lambda$ is nilpotent of size $m_{0,A}$ and $A'$ is invertible
of size $r-m_{0,A}$.
Similarly, $B$ is similar to $N_\mu\oplus B'$ where $N_\mu$ is
nilpotent of size $m_{0,B}$ and $B'$ is invertible of size $s-m_{0,B}$.
It follows from Theorem~\ref{T} that
$\dim(C(A,B))=\dim(C(N_\lambda,N_\mu))+\dim(C(A',B'))$. Further by
Theorem~\ref{T6} $\dim(C(N_\lambda,N_\mu))=\sum_{i\ge1}\lambda'_i\mu'_i$ where,
as usual, $\lambda'$ and $\mu'$ denote conjugate partitions.
We use the observation:
\begin{equation}\label{E12}
  \textup{if $0\le x\le a$ and $0\le y\le b$, then $(a-x)(b-y)+xy\le ab$}
\end{equation}
to show that
\begin{align*}
  \dim(C(A,B))&=\lambda'_1\mu'_1+\sum_{i\ge2}\lambda'_i\mu'_i+\dim(C(A',B'))\\
              &\le \lambda'_1\mu'_1+(m_{0,A}-\lambda'_1)(m_{0,B}-\mu'_1)+(r-m_{0,A})(s-m_{0,B})\\
              &\le \lambda'_1\mu'_1+(r-\lambda'_1)(s-\mu'_1).
\end{align*}
The result follows since $\lambda'_1=\Null(N_\lambda)=\Null(A)=r-\Rk(A)$ and
$\mu'_1=s-\Rk(B)$.
\end{proof}

The Singleton bound $d+k\le n+1$ implies that if $d$ is close to $n=rs$, then
$k$ is small, and the lower bound of Theorem~\ref{T13}(b) implies that $A$ or
$B$ has high rank. Setting $k=1$ in Theorem~\ref{T11}, shows that this
bound is attained for intertwining codes.

The code $C(A,B)$ is the row nullspace of $A^T\otimes I_s+I_r\otimes B$
and the column nullspace of $A\otimes I_s+I_r\otimes B^T$ where ${}^T$ denotes
transpose.

\section*{Acknowledgments}

The authors acknowledge the contribution of Robin Chapman who emailed
us in August 2016 a proof of Theorem~\ref{T}. His formula for
$\dim(C(N_\lambda,N_\mu))$ involved a double sum which can be reduced to a
single sum using Theorem~\ref{T6.5}. The authors gratefully acknowledge
the support of the Australian Research Council Discovery Grant DP160102323.

\end{document}